\definecolor{aliceblue}{rgb}{0.9, 0.95, 1.0}
\newcommand{\C}{{\mathbb C}}
\newcommand{\pslc}{{\mathrm{PSL}_2 (\mathbb{C})}}
\newcommand{\slc}{{\mathrm{SL}_2 (\mathbb{C})}}
\newcommand{\pslr}{{\mathrm{PSL}_2 (\mathbb{R})}}
\newcommand\HHH{{\mathbb H}}
\newcommand\HH{{\mathbb H}}
\newcommand{\cp}{\mathbb{C}\mathrm{P}^1}
\newcommand\Te{Teichm\"{u}ller }
\newcommand\T{{\mathcal T}}
\newtheorem{theorem}{Theorem}[section]
\newtheorem{prop}[theorem]{Proposition}
\newtheorem{cor}[theorem]{Corollary}
\newtheorem{thm}{Theorem}[section]
\newtheorem*{jthm}{Theorem}
\newtheorem{lem}[thm]{Lemma}
\theoremstyle{definition}
\newtheorem{defn}[thm]{Definition}
\title[Monodromy groups of $\cp$-structures on punctured surfaces]{Monodromy groups of $\cp$-structures\\ on punctured surfaces}
\author{Subhojoy Gupta}
\address{Department of Mathematics, Indian Institute of Science,
Bangalore, India}
\email{subhojoy@iisc.ac.in}
\begin{document}

\begin{abstract} For a punctured surface $S$, we characterize the representations of its fundamental group into $\pslc$ that arise as the monodromy of a meromorphic projective structure on $S$  with poles of order at most two and no apparent singularities.  This proves the analogue of a theorem of Gallo-Kapovich-Marden concerning $\cp$-structures on closed surfaces, and settles a long-standing question about characterizing monodromy groups for the Schwarzian equation on punctured spheres. The proof involves a geometric interpretation of the Fock-Goncharov coordinates of the moduli space of framed $\pslc$-representations, following ideas of Thurston and some recent results of Allegretti-Bridgeland.
\end{abstract}

\maketitle

\section{Introduction}

Let $S_{g,k}$ be an oriented surface of genus $g\geq 0$ and $k \geq 1$ punctures, that has negative Euler characteristic, and let $\Pi$ denote its fundamental group. Let $\mathcal{P}^\circ_g(k)$ be the space of meromorphic projective structures on a surface of genus $g$ and $k$ labelled punctures,  such that each puncture corresponds to a pole of order at most two, and there are no apparent singularities. 

Then, we have a monodromy (or holonomy) map
\begin{equation}\label{mmap}
\Phi : \mathcal{P}^\circ_g({k}) \to  \large\chi(\Pi) 
\end{equation}
where $\chi(\Pi)$ is the \textit{moduli space of $\pslc$-representations} of the surface-group $\Pi$, the space of representations up to conjugation (see \S2.4).  The requirement of no apparent singularities is equivalent to the condition that no puncture has trivial (identity) monodromy around it. We provide an expository account of all these notions in \S2.\vspace{.15mm}

Our main result is:

\begin{thm}\label{thm1} A representation $\rho\mathbin{:}\Pi \to \pslc$ is the monodromy of a meromorphic projective structure in $\mathcal{P}^\circ_g({k}) $ if and only if $\rho$ is non-degenerate, and has non-trivial monodromy around each puncture. 
\end{thm}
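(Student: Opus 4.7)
The forward implication is straightforward: by definition, no apparent singularities is equivalent to each peripheral element having non-trivial monodromy, and non-degeneracy of $\rho$ follows from the existence of a locally injective developing map (a representation with image fixing a point or pair of points in $\cp$ cannot serve as monodromy of any projective structure on a surface of negative Euler characteristic, since this would constrain the image of the developing map to a single Borel orbit). The substance of the theorem is the converse, which I would approach via the Fock--Goncharov coordinates suggested in the abstract, following Thurston and Allegretti--Bridgeland.

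Given a non-degenerate $\rho:\Pi\to\pslc$ with non-trivial monodromy around each puncture, every peripheral element has at least one fixed point in $\cp$, so we may lift $\rho$ to a framed representation $\hat\rho$ by selecting one such fixed point per puncture. Fix an ideal triangulation $T$ of $S_{g,k}$. To each edge $e$ of $T$ the framed representation assigns a cross-ratio coordinate $x_e \in \C^*$, computed from the four framing points at the cusps of the quadrilateral containing $e$. The plan is to build a projective structure $Z_T$ directly from the data $(T, \{x_e\})$, in the spirit of Thurston's shear-coordinate reconstruction of hyperbolic structures and its $\pslc$-analogue: for each combinatorial ideal triangle of $T$ take an ideal triangle in $\cp$ whose vertices are the relevant framing points, and glue adjacent triangles across each edge $e$ by the unique M\"obius transformation prescribed by $x_e$. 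The resulting $\rho$-equivariant developing map $\tilde f:\tilde S_{g,k}\to\cp$ then descends to an element of $\mathcal{P}^\circ_g(k)$ with monodromy $\rho$ by construction.

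The main obstacle, where non-degeneracy of $\rho$ is used essentially, is twofold. First, the construction goes through cleanly only when each $x_e$ is admissible (none equal to $0$ or $\infty$, corresponding to collapsed ideal triangles); I expect to show that for any non-degenerate $\rho$, some ideal triangulation of $S_{g,k}$ yields admissible coordinates, via a flip argument together with a judicious choice of framing at the punctures where more than one fixed point is available. Second, one must verify that the resulting structure genuinely has poles of order at most two, and not higher-order irregular singularities. This requires a careful asymptotic analysis near each puncture, where infinitely many ideal triangles accumulate with a common vertex at the framing point, and comparison of the induced germ of $\tilde f$ with the explicit model developing maps for a rank-two pole in the parabolic, loxodromic, and elliptic cases of the peripheral holonomy. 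Granted these two technical points, no apparent singularities is immediate from non-trivial peripheral monodromy, and the theorem follows.
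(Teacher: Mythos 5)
Your forward implication is not correct as stated, and the error is substantive. You claim that a representation whose image preserves a point or a pair of points of $\cp$ cannot be the monodromy of any projective structure on the punctured surface; this proves too much and contradicts the theorem itself. Degeneracy (Definition \ref{bel}) is a \emph{joint} condition on a global invariant set $F$ \emph{and} on the peripheral monodromy (each peripheral element must fix $F$ pointwise, and be parabolic or trivial when $F$ is a single point). The paper's Example 1 exhibits a reducible representation $\rho_1$ with global fixed point $\infty$ that is nevertheless non-degenerate, and Example 2(i) a dihedral representation preserving $\{0,\infty\}$ that is non-degenerate; by Theorem \ref{thm1} both \emph{do} arise as monodromies in $\mathcal{P}^\circ_g(k)$, which your "single Borel orbit" argument would forbid. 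The actual forward direction is not soft: it passes through framed representations and rests on Allegretti--Bridgeland's Theorem 6.1 (that the framed monodromy of a signed meromorphic projective structure is a non-degenerate \emph{framed} representation), combined with Proposition \ref{prop1} relating framed and unframed non-degeneracy.

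For the converse your overall strategy (frame $\rho$, find a triangulation with admissible Fock--Goncharov coordinates, reconstruct a structure) is aligned with the paper, and you correctly flag the two hard points --- but the construction step as you describe it does not produce a surface. Three points of $\cp$ do not bound an "ideal triangle in $\cp$", and gluing such triangles by M\"obius maps prescribed by $x_e$ yields no immersion of $\widetilde{S}$ into $\cp$. The missing mechanism is the pleated plane: the paper maps each triangle of $\widetilde{T}$ to the totally geodesic ideal triangle in $\mathbb{H}^3$ spanned by the framing points, \emph{straightens} this pleated plane by replacing each cross-ratio $x_e$ with $|x_e|$ to obtain the developing map of a genuine hyperbolic surface $\widehat{S}$ (whose end at a puncture is a cusp or a geodesic boundary according to whether $\sum_i \ln|x_{e_i}|$ over the incident edges vanishes --- Corollary \ref{cor-toy}), and then \emph{grafts} $\widehat{S}$ along the bending lamination whose weights are the arguments $\arg(x_e)$, with infinite grafting (semi-infinite lunes) along the geodesic boundary components. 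Only after this does your second technical point become a concrete computation: the Schwarzian of the explicit local models $z\mapsto ie^{az}$ (geodesic boundary case) and $z\mapsto e^{-i\alpha z}$ or $z$ (cusp case) is what certifies the pole order is at most two. Your first technical point (admissibility of coordinates for some signed triangulation) is exactly Theorem 9.1 of Allegretti--Bridgeland and is a genuinely nontrivial input, not something recoverable by a short flip argument; as written, both pillars of your converse remain unproved.
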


\smallskip

Here, a representation is \textit{degenerate} if there is a set $F$ comprising exactly one or two points in $\cp$ such that the image group $\rho(\Pi)$ preserves $F$, and the monodromy around each puncture fixes $F$ (and is parabolic or identity if $F$ is a single point); the representation is \textit{non-degenerate} otherwise -- see Definition \ref{bel}. This notion is related, but distinct from the usual notions of ``non-elementary" or ``irreducible" -- see \S2.4 for examples and a discussion.

\medskip

The question of characterizing such monodromy representations arose in the work of Poincar\'{e} in the context of solving  linear differential equations on a punctured sphere. See \S2.1 and \S2.3 for a brief account of the relation of the Schwarzian equation with the meromorphic projective structures we consider. In this case, by comparing the dimensions of the spaces appearing in \eqref{mmap}, on page 218 of \cite{PActa} Poincar\'{e} writes: {``On peut \textit{en g\'{e}n\'{e}ral} trouver une \'{e}quation du $2^d$ ordre, sans points \`{a} apparence singuli\`{e}re qui admette un groupe donn\'{e}."} Our theorem above thus clarifies his remark, and specifies which monodromy groups do appear, not only for punctured spheres but for punctured surfaces of higher genera as well. \vspace{.15mm}

In more recent times, projective structures in $\mathcal{P}^\circ_g({k})$ have been considered by Luo in  \cite{Luo} where he analyzed the derivative of the map $\Phi$ in \eqref{mmap} when the monodromy around each puncture is non-parabolic (\textit{c.f.} the discussion in \S2.3). In that paper, the holomorphic quadratic differentials having the condition that the order of the pole at each puncture is at most two are called \textit{quasi-bounded}. 
\smallskip

For a closed surface $S_g$ where $g\geq 2$, we can define the representation variety $\chi_g$ comprising representations of $\pi_1(S_g)$ into $\pslc$ upto conjugation, and the space  $\mathcal{P}_g$ of marked $\cp$-structures on $S_g$. We then have a monodromy map
\begin{equation}\label{mmap2}
\Phi_g : \mathcal{P}_g \to  \chi_g
\end{equation}
which is a local homeomorphism by a result of Hejhal in \cite{Hejhal}, and with infinite fibers for each point in its image (this is implicit in \cite{gkm}, see also \cite{Baba1}). 
In this setting, Gallo-Kapovich-Marden proved:

\begin{jthm}[\cite{gkm}]  The image of the monodromy map $\Phi_g$ in \eqref{mmap2} is precisely the set of  non-elementary representations that admit a lift to $\slc$.
\end{jthm}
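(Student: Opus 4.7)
Proof proposal for the Gallo–Kapovich–Marden theorem.

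I would start by dispatching the \emph{necessity} of the two conditions. Given a $\cp$-structure with developing pair $(\mathrm{dev},\rho)$, the Schwarzian derivative of $\mathrm{dev}$ relative to a background uniformization defines a holomorphic quadratic differential $q$ on $S_g$, and the $\pslc$-valued monodromy of $\mathrm{dev}$ is the projectivization of the monodromy of the rank-two linear ODE $y''+\tfrac{1}{2}qy=0$; this monodromy lives in $\slc$, yielding the required lift. For non-elementarity, I would argue by contradiction: if $\rho(\pi_1S_g)$ preserved a point or unordered pair in $\cp$, one could pull back along $\mathrm{dev}$ a $\rho$-equivariant holomorphic object (a meromorphic $1$-form with simple poles, or a quadratic differential with at worst simple poles), obtaining on the closed surface $S_g$ a holomorphic object whose existence contradicts genus $\geq 2$. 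The elementary-solvable case and the purely elliptic case must be ruled out similarly, using that $\mathrm{dev}$ is a local homeomorphism of non-compact, simply-connected surfaces.

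The \emph{sufficiency} direction is the substantive content. The plan is to build the $\cp$-structure piece by piece along a pants decomposition $\{P_1,\dots,P_{2g-2}\}$ cut out by simple closed curves $\gamma_1,\dots,\gamma_{3g-3}$. The first task is to choose the decomposition so that each $\rho(\gamma_j)$ is loxodromic (ideally, with attracting/repelling axes in general position); non-elementarity of $\rho$ should make this possible after acting by sufficiently many Dehn twists, since the orbit of any curve class under the mapping class group is rich enough to hit the generic locus where $\rho$ evaluates to loxodromic elements.

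Next, for each pair of pants $P_i$ with $\pi_1 P_i\cong F_2$, I would construct a $\cp$-structure whose monodromy agrees with $\rho|_{P_i}$. When all three boundary holonomies are loxodromic in general position, one can produce an explicit Schottky-type fundamental domain in $\cp$ bounded by arcs of the isometric circles of the three boundary generators, and quotient to obtain a pair-of-pants $\cp$-structure; the resulting structure has the correct monodromy and a standard annular collar near each boundary. The $\slc$-liftability of $\rho$ is needed here to make these local choices compatible across the whole decomposition (the obstruction to global coherence of sign choices is exactly the $w_2$ class).

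Finally, to assemble a global structure on $S_g$, one must glue the pants $\cp$-structures across each $\gamma_j$. The two annular germs produced on either side of $\gamma_j$ have the same monodromy generator in $\rho(\gamma_j)$, but their $\cp$-moduli (a twist–length parameter in the quotient of $\mathbb{C}^*$ by that loxodromic) need not coincide. Here I would invoke $2\pi$-\textbf{grafting} along $\gamma_j$, which alters the $\cp$-structure by inserting an annulus without changing the monodromy, shifting the gluing parameter by a lattice vector; iterating grafting on either side should arrange equality of the two germs, permitting the pants pieces to be joined into a global $\cp$-structure on $S_g$ realizing $\rho$. The degenerate case where some $\rho(\gamma_j)$ is parabolic or elliptic is handled by a small perturbation argument combined with the observation that the image of $\Phi_g$ is closed in the locus of non-elementary representations, or by a direct limiting construction.

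The main obstacle I would expect to be the gluing step: constructing each pair-of-pants piece is a local existence result that can be read off from explicit models, but arranging the pants pieces to be simultaneously gluable is a global constraint, and the indeterminacy in each pants construction (the grafting-invariant choices) must be exploited in a coordinated way. A secondary, but subtle, difficulty is genuinely reducing to the case of loxodromic pants curves: if non-elementarity only guarantees the existence of \emph{some} loxodromic element, then producing a full pants system of loxodromics requires a careful combinatorial argument in the curve complex.
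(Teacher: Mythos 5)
First, a point of order: the paper does not prove this theorem. It is quoted from Gallo--Kapovich--Marden \cite{gkm} as background, and the only thing the paper records about its proof is the crucial step, namely that for any non-elementary $\rho$ one can find a pants decomposition of $S_g$ such that the restriction of $\rho$ to \emph{each pair of pants} is a rank-two Schottky group. Your sketch follows the correct overall outline of \cite{gkm} --- necessity via the Schwarzian/ODE lift together with an equivariant-object argument, sufficiency via pants pieces assembled by grafting --- but it has genuine gaps exactly where the real content of the theorem lies.

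The most serious gap: what is needed is not merely that each $\rho(\gamma_j)$ be loxodromic, but that the restriction of $\rho$ to each pair of pants be a two-generator \emph{Schottky} group (free, discrete, with a fundamental domain bounded by four disjoint Jordan curves). Your proposed mechanism --- act by Dehn twists until the curves land in the ``generic locus where $\rho$ evaluates to loxodromic elements'' --- does not yield this: loxodromicity of the three boundary holonomies of a pants implies neither discreteness nor freeness of the group they generate, and without the Schottky property the ``explicit fundamental domain bounded by isometric circles'' you invoke need not exist. Producing the Schottky pants decomposition is the bulk of \cite{gkm} (their Parts II--III) and is a delicate combinatorial and geometric argument, not a genericity statement. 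Relatedly, your fallback for the case where some $\rho(\gamma_j)$ is parabolic or elliptic --- a perturbation argument plus ``the image of $\Phi_g$ is closed in the locus of non-elementary representations'' --- leans on a closedness assertion that is not known a priori (the monodromy map is a non-proper local homeomorphism, so openness is cheap but closedness is not); GKM instead avoid the issue by proving directly that a Schottky decomposition always exists. Your gluing step is right in spirit but slightly off in mechanism: the two annular germs on either side of $\gamma_j$ both embed in the quotient torus $\bigl(\cp\setminus\mathrm{Fix}\,\rho(\gamma_j)\bigr)/\langle\rho(\gamma_j)\rangle$, and one joins them after inserting whole copies of this torus ($2\pi$-grafting); this is a discrete insertion and the gluing happens inside the common torus, so one does not need to ``arrange equality'' of a continuous modulus by iterating a discrete shift. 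Finally, your placement of the $\slc$-liftability hypothesis (as the obstruction to making the sign choices on the pants pieces globally coherent) is correct and is indeed where it enters in \cite{gkm}.
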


Their methods are specific to a closed surface; a crucial step in their proof is to obtain, given a non-elementary representation $\rho$, a pants decomposition of the surface such that the restriction of $\rho$ to each pair of pants is a Schottky group. They had raised the question of extending their methods to the punctured case  in \S12.2 of \cite{gkm} (see Problem 12.2.1), which Theorem \ref{thm1} addresses.\vspace{.15mm}

The fact that in the punctured case,  the image is not just the space of non-elementary or irreducible representations, was well-known. Indeed, spherical cone-metrics on a surface are special examples of meromorphic projective structures that have poles of order at most  two at the cone-points; monodromy groups for such structures lie in $\text{PSU}(2)\cong \text{SO}(3)$.  Interestingly, there are examples of such structures with monodromy groups that are degenerate in the sense of Definition \ref{bel} (called \textit{co-axial} in this case --  see \cite{Chenetal} and \cite{Eremenko}), however they would necessarily have apparent singularities (i.e.\ cone-angles that are integer multiples of $2\pi$), and hence will not be $\cp$-structures in $\mathcal{P}^\circ_g({k})$. 

\medskip

\textit{About the proof.} Our proof of Theorem \ref{thm1}  utilizes the notion of \textit{framed} representations and their moduli space $\widehat{\chi}(\Pi)$, due to Fock-Goncharov (\cite{FG}), and the framed monodromy map 
\begin{equation}\label{mmap3}
\widehat{\Phi}: \mathcal{P}^\ast_g({k}) \to \widehat{\chi}(\Pi)
\end{equation}
 due to Allegretti-Bridgeland (\cite{AllBrid}), where $\mathcal{P}^\ast_g({k})$ is the space of \textit{signed} projective structures (defined in \S2.5). 
 Indeed, Theorem \ref{thm1} follows from

\begin{thm}\label{thm2} 
The image of the framed monodromy map $\widehat{\Phi}$ in \eqref{mmap3} is precisely the set of non-degenerate framed representations that have non-trivial monodromy around each puncture. 
\end{thm}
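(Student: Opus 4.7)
The plan is to leverage the Fock-Goncharov coordinate system on $\widehat{\chi}(\Pi)$ associated to an ideal triangulation $\tau$ of $S_{g,k}$, together with the geometric interpretation -- due to Thurston and formalized in the framed setting by Allegretti-Bridgeland -- whereby a tuple of cross-ratio coordinates $\{X_e\}_{e \in E(\tau)}$ in $\C^\ast$, together with a choice of signing, assembles into a signed $\cp$-structure whose framed monodromy is the corresponding framed representation. Theorem~\ref{thm2} then amounts to two assertions: (i) the image of $\widehat{\Phi}$ consists of framed representations with non-trivial puncture monodromies and no global degeneracy; and (ii) every such framed representation arises from this assembly for some suitable triangulation.

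For the \emph{forward direction} (i), the non-triviality of puncture monodromies follows directly from the exclusion of apparent singularities in the definition of $\mathcal{P}^\circ_g(k)$. For non-degeneracy, suppose for contradiction that some $P \in \mathcal{P}^\ast_g(k)$ has framed monodromy $(\rho, \beta)$ with an invariant set $F \subset \cp$ of one or two points such that every framing $\beta_i$ lies in $F$. By the Allegretti-Bridgeland construction, $P$ is assembled from ideal triangles of some triangulation $\tau$ that develop to non-degenerate triangles in $\cp$, so the three framings at the vertices of every such triangle are necessarily distinct. But all these framings lie in $F$, which has at most two points -- a contradiction.

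For the \emph{reverse direction} (ii), given a non-degenerate framed representation $(\rho, \beta)$ with non-trivial puncture monodromies, the goal is to produce $P \in \mathcal{P}^\ast_g(k)$ with $\widehat{\Phi}(P) = (\rho, \beta)$. Fix an ideal triangulation $\tau$ of $S_{g,k}$ and compute the Fock-Goncharov cross-ratio $X_e$ on each edge $e$ from the four vertex framings around $e$. Call $\tau$ \emph{generic} for $(\rho, \beta)$ if every $X_e \in \C^\ast$, equivalently no three of the four framings around any edge coincide. Granted a generic $\tau$, I would assemble $P$ by developing each ideal triangle of $\tau$ to the ideal triangle in $\cp$ with vertices at its three framings, and gluing across each edge $e$ using the shear $X_e$, with an edge sign that resolves the gluing ambiguity and corresponds to grafting where appropriate. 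The resulting signed projective structure lies in $\mathcal{P}^\ast_g(k)$, has framed monodromy $(\rho, \beta)$, and the pole-order-at-most-two constraint at each puncture follows from the non-trivial puncture monodromy.

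The \textbf{main obstacle} is showing that every non-degenerate $(\rho, \beta)$ admits a generic ideal triangulation. If not, every triangulation $\tau$ has some edge where three of the four surrounding framings coincide; I would use the Fock-Goncharov mutation formulae to track how edge-degeneracy propagates under flips, and combine this with a dynamical argument on $\rho(\Pi)$-orbits in $\cp$ to show that persistent degeneracy across all triangulations forces the existence of a global $\rho$-invariant set of at most two points compatible with all puncture framings -- contradicting non-degeneracy in the sense of Definition~\ref{bel}. The deduction of Theorem~\ref{thm1} from Theorem~\ref{thm2} is then short: a non-degenerate $\rho$ with non-trivial puncture monodromies can be equipped with a framing (for instance, a fixed point of each puncture monodromy) to produce a non-degenerate framed representation, which by Theorem~\ref{thm2} lies in the image of $\widehat{\Phi}$, yielding the desired projective structure in $\mathcal{P}^\circ_g(k)$.
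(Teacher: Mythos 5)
Your overall strategy matches the paper's in outline: both directions hinge on Fock--Goncharov coordinates for a suitable ideal triangulation, and surjectivity is proved by assembling a projective structure from those coordinates. But there are two genuine gaps. First, the ``main obstacle'' you identify --- that every non-degenerate framed representation admits a triangulation with all cross-ratios in $\C^\ast$ --- is not something you resolve; you only sketch a strategy via mutation formulae and a dynamical argument. The paper does not prove this either: it invokes Theorem 9.1 of Allegretti--Bridgeland (Theorem \ref{ab-thm91}) as a black box. Moreover, your formulation is too strong: what that theorem provides is a \emph{signed} triangulation $(T,\epsilon)$, i.e.\ one is allowed first to switch the choice of fixed point of the puncture monodromy at the punctures where $\epsilon=-1$, find non-zero coordinates for the modified framing $\epsilon\cdot\hat\rho$, and only at the very end undo the switch at the level of signed projective structures. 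Your proposal omits this signing step, and without it the existence of a ``generic'' triangulation for the \emph{given} framing is not what is known.

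Second, and more seriously, you assert that ``the pole-order-at-most-two constraint at each puncture follows from the non-trivial puncture monodromy.'' That is precisely the main new technical content of the paper, and it does not follow from non-triviality of the monodromy; it requires a local analysis of the developing map near each puncture. The paper's route is: build the $\rho$-equivariant pleated plane in $\HH^3$ from the cross-ratios, \emph{straighten} it (Definition \ref{straight}) to obtain a hyperbolic surface $\widehat S$ whose ends are cusps or geodesic boundaries according to whether $\sum_i \ln \lvert s_i\rvert$ vanishes (Lemma \ref{toy}, Corollary \ref{cor-toy}, Proposition \ref{prop2}), then graft along the resulting measured lamination, which has leaves of infinite weight spiralling onto the geodesic boundaries. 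Proposition \ref{prop3} then computes the Schwarzian derivative of explicit local models ($f_a(z)=ie^{az}$ on the semi-infinite lune over a geodesic boundary; $z$ or $e^{-i\alpha z}$ at a cusp) to verify the order-two pole. None of this appears in your proposal; ``gluing across each edge using the shear $X_e$'' does not by itself control the pole order. A smaller point: your forward-direction argument presupposes that every $P\in\mathcal{P}^\ast_g(k)$ is assembled from a triangulation whose triangles develop non-degenerately, which is essentially what needs proving in that direction; the paper simply cites Theorem 6.1 of Allegretti--Bridgeland (Theorem \ref{ab-thm61}) for it.
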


Here, the notion of \textit{non-degenerate framed representations} was introduced by Allegretti-Bridgeland in \cite{AllBrid} (see Definition \ref{degen} and Theorem \ref{ab-thm61}). The relation with Definition \ref{bel} -- that accounts for the similarity in nomenclature --  is clarified in Proposition \ref{prop1}.
They showed that a non-degenerate framed representation admits a signed triangulation $T$ on $S_{g,k}$ with non-zero complex cross-ratios for each edge (also known as Fock-Goncharov coordinates for the framed representation) -- see Theorem \ref{ab-thm91}. This can be thought of as the analogue of finding a Schottky pants-decomposition in the work of Gallo-Kapovich-Marden, and is a crucial ingredient in our proof. 

The key idea of our proof is to then interpret the complex cross-ratios geometrically to determine a pleated plane in hyperbolic $3$-space. This in turn determines a $\cp$-structure on the punctured surface via the operation of grafting the ``straightened" plane, following ideas of Thurston (see \S2.2).  Finally, we analyze the Schwarzian derivative of the developing map of the resulting structure around the punctures to conclude that we have obtained our desired structure in $\mathcal{P}^\ast_g({k})$.

A similar strategy had been employed in \cite{GM2} in the case of poles of order \textit{greater} than two. There, the analysis around the punctures was different (and easier); in particular, the ``straightening" of the pleated plane is always the developing map of a crowned hyperbolic surface. As we shall see in \S3.2 and \S3.3, in the present case we obtain either a cusp or geodesic boundary in the quotient surface, and we need to consider these two possibilities separately. 

\smallskip 

Our argument, together with the operation of ``$2\pi$-grafting" (used, for example, in \cite{Goldman}) yields the following immediate consequence:

\begin{thm}[Infinite fibers] \label{thm3} For any representation $\rho$ in the image of $\Phi$ in \eqref{mmap}, the preimage $\Phi^{-1}(\rho)$ is an infinite subset of $ \mathcal{P}^\circ_g({k})$. 
\end{thm}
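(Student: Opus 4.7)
The plan is to use iterated $2\pi$-grafting, as the statement itself suggests. Starting from any $P \in \Phi^{-1}(\rho) \subset \mathcal{P}^\circ_g(k)$, I would construct an infinite family of pairwise distinct $\cp$-structures with the same monodromy $\rho$ by successively grafting along a single suitable simple closed curve.

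The first step is to produce a non-peripheral simple closed curve $\gamma$ on $S_{g,k}$ with $\rho(\gamma)$ loxodromic. Non-degeneracy of $\rho$ in the sense of Definition \ref{bel} prevents $\rho(\Pi)$ from preserving any one- or two-point subset of $\cp$ in a way consistent with its peripheral behaviour, so the image group is genuinely non-elementary once the peripheral conditions are taken into account. By a standard argument, this forces the existence of a non-peripheral simple closed curve with loxodromic monodromy; alternatively, one can extract such a curve directly from the signed ideal triangulation with non-zero Fock--Goncharov coordinates that underlies the proof of Theorem \ref{thm2}, whose non-zero cross-ratios produce loxodromic monodromies on the simple closed curves obtained by concatenating arcs around suitable vertices.

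Second, I would perform a $2\pi$-grafting of $P$ along $\gamma$: after isotoping $\gamma$ to be disjoint from standard cusp neighborhoods of the punctures, cut $P$ along $\gamma$ and insert a copy of the projective annulus $(\cp \setminus \mathrm{Fix}(\rho(\gamma)))/\langle \rho(\gamma) \rangle$. The resulting structure $P'$ has the same monodromy $\rho$ by the standard theory of grafting (cf.\ \cite{Goldman}). Since the grafting is supported in a neighborhood of $\gamma$, disjoint from the punctures, the projective charts near each puncture are unchanged; in particular the Schwarzian derivative retains its local form there, so each puncture remains a pole of order at most two with non-trivial monodromy. Hence $P' \in \mathcal{P}^\circ_g(k)$.

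Iterating gives structures $P^{(n)} \in \Phi^{-1}(\rho)$ for every $n \geq 0$. These are pairwise distinct because each $2\pi$-grafting strictly enlarges the embedded projective annulus homotopic to $\gamma$: the modulus of this annulus in the underlying Riemann surface grows linearly in $n$, so the underlying conformal structures already diverge in the Teichm\"{u}ller space of $S_{g,k}$. The main obstacle is the first step --- producing $\gamma$ in full generality --- since the non-degeneracy condition is subtler than non-elementarity; the other steps are then essentially standard, relying on the locality of grafting to preserve the pole condition at each puncture.
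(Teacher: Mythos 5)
Your overall strategy (iterated $2\pi$-grafting) is the right one, but the object you graft along is wrong, and this creates a genuine gap at exactly the step you flag as the main obstacle. Your first step requires a \emph{non-peripheral simple closed curve} $\gamma$ with $\rho(\gamma)$ loxodromic, and this can fail in two independent ways. First, on the thrice-punctured sphere $S_{0,3}$ (which is covered by the theorem) there are no essential non-peripheral simple closed curves at all, so the construction cannot even begin. Second, even on surfaces where such curves exist, a non-degenerate representation in the image of $\Phi$ need not contain \emph{any} loxodromic element: the paper explicitly points out that spherical cone-metrics give structures in $\mathcal{P}^\circ_g(k)$ whose monodromy lies in $\mathrm{PSU}(2)$, where every element is elliptic or the identity. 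For such $\rho$ your annulus $(\cp\setminus\mathrm{Fix}(\rho(\gamma)))/\langle\rho(\gamma)\rangle$ does not exist as a projective annulus, so the grafting step is unavailable. Non-degeneracy is strictly weaker than the hypotheses under which one extracts loxodromic simple closed curves in the closed-surface (Gallo--Kapovich--Marden) setting, and your appeal to a ``standard argument'' does not survive these examples.

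The paper's proof sidesteps all of this by performing the $2\pi$-grafting along a \emph{leaf of the lamination} $\lambda$ --- equivalently, an edge of the ideal triangulation $T$ on the hyperbolic surface $\widehat{S}$ produced in the proof of Theorem \ref{thm1} --- rather than along a closed curve. Concretely, one takes the structure $P$ realized as $\mathrm{Gr}_\lambda(\widehat{S})$ and replaces the weight $\alpha\geq 0$ on a single edge $l$ of $T$ by $\alpha+2\pi n$. Since a $2\pi$-rotation about a pleating line does not change the associated pleated plane (as noted in \S2.2), each resulting structure $P_n$ has the same monodromy $\rho$; and ideal arcs always exist and require no hypothesis on the monodromy of any closed curve. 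If you want to salvage your write-up, you should replace your simple closed curve by an edge of the ideal triangulation underlying the grafting description of $P$, and then your locality argument for the order of the poles at the punctures needs to be redone, since an ideal arc \emph{does} enter the cusp neighborhoods (this is handled in the paper by the local Schwarzian computations of Proposition \ref{prop3}, Cases 2(i)--(ii)).
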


Note that it follows from the main result of \cite{Luo} that such a fiber $\Phi^{-1}(\rho)$ is a discrete subset of $ \mathcal{P}^\circ_g({k})$, at least when $\rho$ lies in the smooth part of  $\large\chi(\Pi)$ and has non-parabolic monodromy around the punctures. 

\smallskip

To conclude the introduction, we note that the work of Allegretti-Bridgeland in \cite{AllBrid} considered meromorphic projective structures which could have poles of orders greater than two, \textit{as well as} poles of orders at most two, at the punctures.  For a $k$-tuple of integers $\mathfrak{n} = (n_1,n_2,\ldots n_k)$, let $\mathcal{P}_g^\ast(\mathfrak{n})$ be the space of signed $\cp$-structures on $S_{g,k}$ with the $i$-th puncture having a pole of order $n_i\geq 0$. This admits a monodromy map $F$ to the corresponding space of framed representations  $\widehat{\chi}_g(\mathfrak{n})$. By combining the techniques in \cite{GM2} and this paper, we deduce:

\begin{thm}\label{thm4}
The image of the monodromy map $F: \mathcal{P}_g^\ast(\mathfrak{n}) \to \widehat{\chi}_g(\mathfrak{n})$ is precisely the subset of non-degenerate framed representations  that have non-trivial monodromy around the punctures corresponding to the poles of order at most two.
\end{thm}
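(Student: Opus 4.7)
The plan is to treat this mixed-order case by combining the constructions of the present paper, which handle poles of order at most two, with the parallel constructions of \cite{GM2}, which handle poles of order at least three. Given a non-degenerate framed representation $\widehat{\rho}\in \widehat{\chi}_g(\mathfrak{n})$ with non-trivial monodromy around each puncture whose prescribed order $n_i$ is at most two, the first step is to invoke the appropriate version of Theorem \ref{ab-thm91} (valid in \cite{AllBrid} in the full mixed-order setting) to obtain a signed ideal triangulation $T$ of $S_{g,k}$, compatible with the decoration $\mathfrak{n}$, for which every edge carries a non-zero complex cross-ratio (Fock--Goncharov coordinate).

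I would then assemble a $\widehat{\rho}$-equivariant pleated plane $\widetilde{f}\colon \widetilde{S}_{g,k}\to \HHH^3$ exactly as in \S3 of the present paper and in \cite{GM2}: each lifted face of $T$ is realized as an ideal hyperbolic triangle whose vertices are the three flags picked out by the framing, and adjacent triangles are glued across each edge using the shear and bending data determined by the non-zero complex cross-ratio on that edge. The projective structure $\Sigma$ on $S_{g,k}$ is then obtained by ``straightening'' the pleating locus of $\widetilde{f}$ and grafting the straightened surface back to recover a developing map into $\cp$. Since this construction is purely local on each triangle and edge of $T$, it proceeds identically regardless of the orders of the poles at the vertices.

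The substantive content is the local analysis at each puncture, which determines the order of the pole of the Schwarzian of the developing map of $\Sigma$. At a puncture with $n_i\geq 3$, the straightening exhibits a neighborhood of the puncture as a crown end of a hyperbolic surface with geodesic boundary, and the Schwarzian computation of \cite{GM2} shows that the pole has order exactly $n_i$. At a puncture with $n_i\leq 2$, the hypothesis of non-trivial peripheral monodromy allows the analyses of \S3.2 and \S3.3 to run: depending on the type of the peripheral element and the sign assigned by the framing, the straightened neighborhood is either a cusp or a funnel with geodesic boundary, and in either case the Schwarzian has the desired pole of order at most two. The converse direction --- that any $\Sigma \in \mathcal{P}_g^\ast(\mathfrak{n})$ has monodromy of the stated form --- follows by reading the framing off the asymptotic behavior of the developing map at each puncture, invoking Proposition \ref{prop1} for the non-degeneracy and the defining absence of apparent singularities to guarantee non-trivial peripheral monodromy at the order-at-most-two punctures. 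The step I expect to require most care is the compatibility of the two local analyses at adjacent faces of $T$ whose vertices are punctures of different types; although the pleated-plane construction is uniformly defined across $S_{g,k}$, verifying that the Schwarzian attains the exact prescribed pole order simultaneously at every puncture demands careful simultaneous bookkeeping of the decoration and of the local coordinates used in each model.
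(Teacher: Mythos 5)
Your proposal follows essentially the same route as the paper's proof: invoke Allegretti--Bridgeland's Theorem 9.1 in the full mixed-order setting to get a signed triangulation with non-zero cross-ratios, build the pleated plane, straighten and graft, and then observe that the Schwarzian computations (Proposition \ref{prop3} here for the order-at-most-two punctures, and the crown-end analysis of \cite{GM1, GM2} for the rest) are purely local at each puncture, so they combine without interference; the converse is Theorem 6.1 of \cite{AllBrid} plus the no-apparent-singularities assumption. The only point worth flagging is that in the mixed-order setting the notion of non-degeneracy carries the extra condition on boundary arcs (condition (3) in the paper's proof, i.e.\ (D1) of \cite{AllBrid}), which your write-up leaves implicit.
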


This answers a question of Allegretti-Bridgeland (see \S1.7.1 of \cite{AllBrid}) in full generality.\\

\smallskip

\textbf{Acknowledgments.} I acknowledge the SERB, DST (Grant no. MT/2017/ 000706), the UGC Center for Advanced Studies grant, and the Infosys Foundation for their support. This paper arose out of discussions with Mahan Mj regarding related projects, and I am grateful to him for his help and encouragement. I would also like to thank Dylan Allegretti, Shinpei Baba, Indranil Biswas and Sorin Dumitrescu for several interesting conversations. Finally, I would like to thank the anonymous referees for their comments that helped improve article.

\section{Preliminaries}

\subsection{$\cp$-structures on a surface} 

A \textit{$\cp$-structure} or \textit{complex projective structure} on a (possibly open) surface $S$ is a maximal atlas of charts to $\cp$ such that the transition maps are restrictions of M\"{o}bius maps, i.e.\ elements of $\text{Aut}(\cp) = \pslc$. It is thus a $(G,X)$-structure where $G = \pslc$ and $X = \cp$, and can be equivalently described as a pair of a developing map $f:\widetilde{S} \to \cp$ (defined on the universal cover) and a holonomy or monodromy representation $\rho: \pi_1(S) \to \pslc$, where $f$ is a $\rho$-equivariant immersion.

A map between a pair of $\cp$-structures is a \textit{projective isomorphism} if it is a homeomorphism that is locally a restriction of a M\"{o}bius map. For a closed  surface  $S_{g}$ where $g\geq 0$, we can define the space  $\mathcal{P}_{g}$ of marked $\cp$-structures on $S_g$, up to projective isomorphisms preserving the marking. Here, a \textit{marking} is a choice of a homeomorphism from a fixed $S_g$ to our projective surface, up to homotopy. This space admits a forgetful map $\pi: \mathcal{P}_{g} \to \T_g$, where $\T_g$ is the \Te space of  $S_g$, the space of marked Riemann surfaces up to conformal homeomorphism preserving the marking. See \cite{Dum} for an expository account.\vspace{.15mm}

The connection with differential equations and complex analysis arises from the fact that each fiber $\pi^{-1}(X)$ where $X \in \T_g$ can be identified with the vector space $Q(X)$ of holomorphic quadratic differentials on the Riemann surface $X$. We briefly sketch the proof:

In one direction, given a holomorphic quadratic differential $q\in Q(X)$, we obtain a $\cp$-structure by passing to the universal cover $\widetilde{X}$, and solving the \textit{Schwarzian equation} 
\begin{equation}\label{schwarz}
u^{\prime\prime}(z) + \frac{1}{2} \tilde{q} u(z) = 0.
\end{equation} 
Two linearly independent solutions $u_0,u_1$ of his second-order linear differential equation then define the developing map $f:= {u_0}/{u_1}$ which is $\rho$-equivariant for a monodromy representation $\rho:\pi_1(S_g) \to \pslc$. 

In the other direction, if we fix a reference $\cp$-structure $X_0$ (e.g.\ the uniformizing structure), then for any other $\cp$-structure on $X$, the \textit{Schwarzian derivative} of the developing map
\begin{equation}\label{schwarz-deriv}
S(f) = \left(\left(\frac{f^{\prime\prime}}{f^\prime}\right)^\prime - \frac{1}{2}\left(\frac{f^{\prime\prime}}{f^\prime}\right)^2\right) dz^2
\end{equation}
is a holomorphic quadratic differential on $\widetilde{X}$ that descends to one on $X$, defining an element $q\in Q(X)$. Moreover, these directions are inverses of each other; for details, see for example \cite{Gunn} or \cite{Hubbard}.

\subsection{Grafting} Grafting is a geometric operation that results in a $\cp$-structure by deforming a hyperbolic structure, that goes back to Maskit (\cite{Maskit}),  Hejhal (\cite{Hejhal}), and Sullivan-Thurston (\cite{ThuSull}), and was developed in unpublished work of Thurston. \vspace{.15mm}

The following definitions would be useful in our sketch of the construction:

\begin{defn}[Embedded lune]  A \textit{lune} $L_\alpha$ of angle $\alpha \in (0, 2\pi]$ is the region of $\cp$ bounded by two semi-great-circles $\partial_{\pm} L_\alpha$ with common  endpoints $0,\infty$, with an angle $\alpha$ between them.  In the affine chart $\C$, one can choose $\partial_-L_\alpha$ to be the positive imaginary axis, and $\partial_+L_\alpha$ to be the ray at an angle $\alpha$ from the positive imaginary axis.  Note that $L_{2\pi}$ can thus be thought of as $\cp$ slit along the positive imaginary axis between $0$ and $\infty$.
\end{defn}

\textit{Remark.}  The region $L_\alpha \subset \cp$ defined above acquires a standard $\cp$-structure, and by a ``lune" we shall often mean a surface equipped with a  $\cp$-structure that is \textit{isomorphic} to such a region. \vspace{.15mm}

\begin{defn}[Immersed lune]\label{imlune} In the case the angle $\alpha> 2\pi$, a lune $L_\alpha$ is defined as above, with the excess angle passing over to another copy of $\cp$.  In other words, $L_\alpha$ is a conformal surface is obtained by taking $\lfloor \frac{\alpha}{2\pi}\rfloor$ additional copies of $\cp$, each slit along the positive imaginary axis, identified as chain (with the right side of the slit identified with the left side of the slit in the next copy of $\cp$ and so on), such that the last copy of $\cp$ has an embedded lune of the remaining angle $\alpha - 2\pi \cdot \lfloor \frac{\alpha}{2\pi}\rfloor$. 
This lune $L_\alpha$ for $\alpha>2\pi$  admits an immersion to $\cp$ that projects each copy of $\cp$ to a fixed $\cp$.

\end{defn}

 In the simplest case, the  grafting construction is as follows:  Recall that a hyperbolic surface $X$ is a $\cp$-structure with a developing image that is a round disk $\Omega \subset \cp$ that is invariant under a Fuchsian group $\Gamma < \pslr$ where $\Gamma$ is the image of the monodromy representation for $X$. For  a simple closed separating geodesic $\gamma$ on  $X$ with weight $\alpha>0$, we then form the new domain $\Omega^\prime$ immersed in $\cp$ by inserting 
a lune $L_\alpha$ at the developing image of each lift of $\gamma$ in $\Omega$. This new (possibly immersed) domain acquires an action  by a M\"{o}bius group $\Gamma_\alpha < \pslc$ (though the quotient maybe a non-Hausdorff space, e.g.\ when $\Gamma_\alpha$ is dense in $\pslc$). Here, $\Gamma_\alpha$ is the unique representation that restricts to $\Gamma$ on one of the components of $X \setminus \gamma$, and on the other, restricts to the conjugate of $\Gamma$ by an elliptic element that rotates around the axis of $\gamma$ by an angle $\alpha$.  The pair $(\Omega_\alpha, \Gamma_\alpha)$ then defines the new projective surface $X_\alpha$ obtained by grafting $X$ along the weighted simple closed curve $\alpha \cdot \gamma$.

 A  \textit{measured lamination} $\lambda$ is a closed subset of $X$ that is a disjoint collection of complete geodesics, equipped with a transverse measure. (See, for example, \cite{thurstonnotes}.) The construction above extends to grafting $X$ along $\lambda$ by a limiting argument. 
 
 \vspace{.15mm}
 
 In fact, Thurston showed that on a closed surface $S_g$ of genus $g\geq 2$, \textit{any} $\cp$-structure is obtained via this grafting construction; see \cite{KamTan} and \cite{Tanig}, and the more recent exposition in \cite{BabaExp}. Although we shall not need this result, we do need a key ingredient of Thurston's proof, the notion of a \textit{pleated plane} defined by the grafting construction as follows:

To define this, consider first an isometric $\Gamma$-invariant embedding $\Psi_0:\widetilde{X} \to \mathbb{H}^3$ with image on the totally-geodesic equatorial plane in $\mathbb{H}^3$. In particular, the $\Psi_0$-image of the leaves of the lifted lamination $\widetilde{\lambda}$ on the universal cover $\widetilde{X}$, defines a collection of disjoint geodesic lines in $\mathbb{H}^3$.
The pleated plane $\Psi_\alpha: \widetilde{X} \to \mathbb{H}^3$ is then a $\Gamma_\alpha$-equivariant immersion obtained by bending one side of each such geodesic line with respect to the other, by an angle equal to the weight (i.e.\  transverse measure) on that leaf.  (See Figure 1.)  As before, this construction can be defined for any measured lamination $\lambda$ by a limiting argument. Moreover, the initial hyperbolic surface $X$ need not be compact for this construction; indeed,  one can define grafting the Poincar\'{e} disk $\mathbb{D}$ along a collection $\tilde{\lambda}$ of disjoint geodesic line to obtain a simply-connected $\cp$-structure (see Figure 1, and  Theorem 10.6 of \cite{Kulkarni-Pinkall} for the analogue of Thurston's result in this setting).  

\begin{figure}
  \centering
  \includegraphics[scale=0.35]{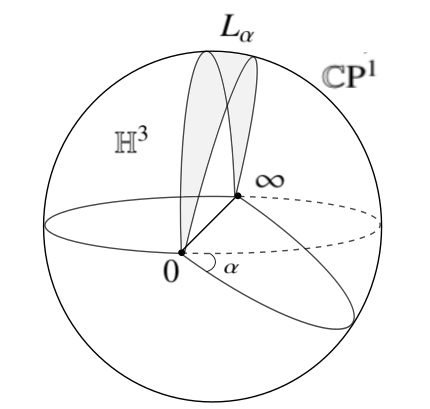}
  \caption{A lune $L_\alpha$ on the ideal boundary and the corresponding pleated plane, in the case $\mathbb{D}$ is grafted along a single geodesic line of weight $\alpha$. }
\end{figure}

Finally, we note that in the above description of grafting,  adding a lune $L_{2\pi n}$ of an angle that is an integer-multiple of $2\pi$ (i.e.\ $n\in \mathbb{N}$) to an already-existing lune in the developing image results in a new projective structure (that wraps more times around $\cp$) but has identical holonomy. Indeed, a $2\pi$-rotation around a pleating line does not affect the  pleated plane  defined by the grafting construction.  This is called $2\pi$-\textit{grafting} and has been used as an effective tool to study fibers of the monodromy map \eqref{mmap2} in the case of a closed surface (see, for example, \cite{Goldman},  \cite{Baba2} and \cite{Baba3}). We shall use $2\pi$-grafting in the proof of Theorem \ref{thm3}.

\subsection{Meromorphic projective structures}
Consider a punctured surface  $S_{g,k}$ (where $k\geq 1$) of negative Euler characteristic. Fix a choice of a reference $\cp$-structure $X_0$  on the closed surface $S_g$ (obtained by filling in the punctures).

We shall restrict to those $\cp$-structures on $S_{g,k}$ that satisfy the following two requirements:
\begin{itemize}
\item[(i)] with respect to the choice $X_0$, the developing map has a Schwarzian derivative with a pole of order less than or equal to two at the punctures, and
\item[(ii)] the monodromy around each puncture is not the identity element in $\pslc$. 
\end{itemize}
Requirement (ii) is the property of having \textit{no apparent singularities}. 
This is precisely the definition used in  \cite{AllBrid} (and elsewhere in the literature -- see, for example, \cite{Iwasaki}): an \textit{apparent singularity} of the Schwarzian equation is a ``regular singularity" around which the (projectivized) monodromy is trivial.  An example of an apparent singularity is a ``branch-point" of a projective structure -- see \cite{Mandelbaum}. 
In what follows we shall briefly discuss how this matches with the classical notion that, as mentioned in the Introduction,  historically arose in the study of linear differential equations.

First, recall that a \textit{regular singularity} of the Schwarzian equation \eqref{schwarz} is a pole of the coefficient function $\tilde{q}$ that has order at most two (\textit{c.f.} requirement (i) above).  This is part of the usual ``Fuchsian" theory of linear differential equations -- see for example, \cite{Hille}.
The idea is that in a neighborhood of a regular singularity $a$, a solution $u(z)$ of \eqref{schwarz} would have a single  (convergent) power series expression; note that this solution need not be meromorphic (i.e.\ with a finite order pole at $a$) since it could involve a logarithmic term (as in $u_1$ in possibility (b) below). 
In contrast, at an \textit{irregular} singularity, a solution would have a power series expression that is only convergent in sectors, and have different asymptotic behaviour in different sectors -- see \cite{Wasow} or \cite{Sib-book} for a more detailed analysis.


From a formal power series argument, it follows that if the regular singularity is at $a=0$, and the coefficient in \eqref{schwarz} is of the form 
\begin{equation}\label{order2-exp} 
\tilde{q}  = \frac{1- \theta^2}{2z^2} + \cdots
\end{equation}  then there are the following two possibilities (for details see, for example, Theorem IX.1.1. of \cite{Saint-Gervais}, or \cite{Iwasaki-book} or \cite{Eremenko2}, or section 5.2 of \cite{AllBrid}): 
\begin{itemize}
\item[(a)]  $\theta \notin \mathbb{Z}$, in which case there are two linearly independent solutions of the form $y^{\pm \theta}$ in a neighborhood $U$ of $0$, where $y(z)$ is a local coordinate on $U$, and the monodromy in $\pslc$ around the regular singularity is multiplication by $e^{2\pi i \theta}$.

\item[(b)] $\theta  \in \mathbb{Z}$, in which case the two solutions are of the form $u_0(y) = y^{\lvert \theta \rvert}$ and $u_1(y) = y^{-\lvert \theta \rvert} + C \ln y$ for some local coordinate $y$ around $0$. If the constant $C=0$, then the monodromy around $0$  is trivial; if the constant $C \neq 0$ then the monodromy around $0$ is a non-trivial parabolic element.
\end{itemize}

The case when $C=0$ in possibility (b) above is an alternative definition of an apparent singularity that often occurs in the literature (see, for example \cite{Iwasaki-book}).  Thus, yet another equivalent definition is: an apparent singularity is a regular singularity around which any solution of the Schwarzian equation is meromorphic (see, for example, \cite{Ince} or \cite{Iwasaki-book}).  In Poincar\'{e}'s definition of an  ``apparent singularity" in  page 217 of \cite{PActa}, it is clear that the (projectivized) monodromy around it is trivial, hence it matches with the preceding discussion.  See Proposition IX.1.2 of \cite{Saint-Gervais} for a summary of these equivalent definitions. 

We note that the definition in \cite{Luo} is slightly different: after  making corrections to be consistent with the standard theory (see also Lemma \ref{pmon} below), his definition of an apparent singularity (on the first page of his paper) simply requires that $\theta \in \mathbb{Z}$. 
In this paper, in contrast, a non-apparent singularity is allowed to have parabolic monodromy around it. \\

Now, let $\mathcal{P}_g(k)$ denote the space of such marked $\cp$-structures on $S_{g,n}$ satisfying only the requirement (i) stated at the beginning of this subsection, up to projective isomorphisms that preserve the marking (this includes a labelling of the punctures). There is a forgetful map  $\hat{\pi}: \mathcal{P}_g(k) \to \T_{g,k}$, and via the Schwarzian derivative, a fiber $\hat{\pi}^{-1}(X)$ can be identified with the vector space $Q_g(k)$ of meromorphic quadratic differentials with poles of order at most  two, at the punctures. 

At each puncture, a quadratic differential $q\in Q_g(k)$ has the form 
\begin{equation}\label{qexp}
q = \left(\frac{\alpha}{z^{2}} + \frac{\beta}{ z}  +  g(z) \right)dz^2
\end{equation} 
in local coordinates around the puncture, where $\alpha, \beta \in \mathbb{C}$ and $g(z)$ is holomorphic.

Following  \cite{AllBrid}, we define the \textit{leading coefficient} of $q$ at the  pole to be $\alpha$ in \eqref{qexp},  the  \textit{residue} of $q$ at the pole to be  $\pm 4\pi i\sqrt{\alpha}$, and the \textit{exponent} to be $\pm 2\pi i \sqrt{1+ 4\alpha}$. It is easy to check that the  leading coefficient is independent of the choice of local coordinates around the puncture, and note that  the residue and the exponent are well-defined up to sign. 

It was shown in Lemma 5.1 of \cite{AllBrid} that:

\begin{lem}\label{pmon}  If the Schwarzian equation is written as $u^{\prime\prime}(z) - q \cdot u(z) = 0$, the monodromy in $\slc$ around the puncture has eigenvalues equal to $-e^{\pm r/2}$ where $r$ is the exponent, as defined above.
\end{lem}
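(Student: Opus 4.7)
The plan is to reduce to a purely local computation near the puncture and apply the classical Frobenius theory for Fuchsian equations (as developed in \cite{Hille}), then match the resulting indicial exponents to the quantity $r$ defined via the leading coefficient $\alpha$.

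First, I would work in a local coordinate $z$ with the puncture at $z=0$, so that by \eqref{qexp} the ODE $u^{\prime\prime} - q \cdot u = 0$ becomes
\begin{equation*}
u^{\prime\prime}(z) - \left(\frac{\alpha}{z^{2}} + \frac{\beta}{z} + g(z)\right) u(z) = 0.
\end{equation*}
This is a regular singular point of the equation, so Frobenius theory applies. Looking for solutions of the form $u(z)=z^{s}(1+O(z))$, one obtains the indicial equation $s(s-1)-\alpha=0$, whose roots are
\begin{equation*}
s_{\pm} = \frac{1 \pm \mu}{2}, \qquad \mu := \sqrt{1+4\alpha}.
\end{equation*}
Observe that the ambiguity in choosing a square root merely exchanges $s_{+}$ and $s_{-}$, consistent with the sign ambiguity in the definition of the exponent $r = \pm 2\pi i \mu$.

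Next I would treat the generic case $\mu \notin \mathbb{Z}$. In this case the standard Frobenius construction produces two linearly independent (locally convergent) solutions $u_{\pm}(z)=z^{s_{\pm}} h_{\pm}(z)$, where the $h_{\pm}$ are holomorphic near $0$ with $h_{\pm}(0)=1$. Analytic continuation along a small loop $z\mapsto e^{2\pi i}z$ around the puncture multiplies $u_{\pm}$ by $e^{2\pi i s_{\pm}}$. Therefore in the basis $(u_{+},u_{-})$ the monodromy is diagonal with eigenvalues
\begin{equation*}
e^{2\pi i s_{\pm}} \;=\; e^{\pi i (1\pm\mu)} \;=\; -\,e^{\pm\pi i \mu} \;=\; -\,e^{\pm r/2},
\end{equation*}
which is the asserted formula. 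Note that the product of the two eigenvalues equals $1$, so this monodromy lies in $\slc$, as it must, since the Wronskian of any two solutions of an ODE with no first-order term is constant.

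Finally I would deal with the exceptional case $\mu\in\mathbb{Z}$, in which the two indicial roots differ by an integer and a logarithmic term can appear in the second Frobenius solution (possibility (b) of \S2.3). Even in this case one of the two basic solutions is still $u_{+}(z)=z^{s_{+}}h_{+}(z)$, and the general theory gives a second solution of the form $u_{-}(z)= C\, u_{+}(z)\log z + z^{s_{-}} k(z)$ with $k$ holomorphic; continuation around the puncture then gives an upper-triangular (possibly unipotent) matrix whose diagonal entries are $e^{2\pi i s_{\pm}}$. Since these two exponentials coincide in the integer case, both eigenvalues equal $-e^{\pm r/2}$ (which themselves collapse to a common value), so the formula holds in this case as well. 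The main technical point, which I would handle by citing the classical Fuchsian theory rather than recomputing, is justifying the convergence of the Frobenius series and the precise form of the logarithmic solution; everything else is bookkeeping with the indicial equation.
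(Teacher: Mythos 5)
Your proposal is correct. Note, though, that the paper itself gives no proof of this lemma: it is quoted verbatim as Lemma 5.1 of \cite{AllBrid}, so there is no in-paper argument to compare against. Your Frobenius computation is the standard argument that underlies that citation, and it is complete: the indicial equation $s(s-1)-\alpha=0$ is the right one (since the equation $u''-qu=0$ has no first-order term, so $p_0=0$ and the coefficient of $z^{-2}$ in $-q$ is $-\alpha$), the roots $s_{\pm}=\tfrac{1\pm\mu}{2}$ with $\mu=\sqrt{1+4\alpha}$ give monodromy multipliers $e^{2\pi i s_{\pm}}=-e^{\pm\pi i\mu}=-e^{\pm r/2}$, the sign ambiguity of the square root matches the sign ambiguity in the definition of $r$, and the Wronskian argument correctly places the matrix in $\slc$ so that speaking of its eigenvalues (rather than their ratio) is legitimate. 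Your handling of the resonant case $\mu\in\mathbb{Z}$ is also right and is consistent with possibility (b) in \S 2.3 and with the remark following the lemma (where $1+4\alpha=\theta^{2}$): the monodromy is then triangular with repeated eigenvalue $-(-1)^{\mu}$, which is still $-e^{\pm r/2}$. The only point worth making explicit is that the loop around the puncture is taken to be $z\mapsto e^{2\pi i}z$ in the chosen local coordinate; any change of orientation or of coordinate is absorbed by the $\pm$ in $r$, as you observe.
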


\textit{Remark.} When the Schwarzian equation  is written as \eqref{schwarz}, this matches with the discussion earlier in the section when the coefficient $\alpha = (\theta^2 -1)/4$ (see \eqref{order2-exp}).  In particular, if $\theta \in \mathbb{Z}$, then the exponent $r$ is an integer multiple of $2\pi i$ (up to a sign), and the (projectivized) monodromy is trivial or parabolic, as mentioned in possibility (b). Moreover, if $\theta =0$, then the singularity is ``logarithmic" and the monodromy is parabolic (see page 238 of \cite{Saint-Gervais}, and Example 3.4 of \cite{AllBrid}).\\

The subset $\mathcal{P}^\circ_g(k) \subset \mathcal{P}_g(k)$ comprises those $\cp$-structures with no apparent singularities, i.e.\ satisfying requirement (ii) above. In \cite{AllBrid}, Allegretti-Bridgeland showed that this is a dense, open subset.
By the Remark above, the exponent of the Schwarzian derivative $q$ at each puncture cannot be zero; this shall later enable us to define a ``sign" at each puncture.
 
 \smallskip
 
As mentioned at the end of \S1,  \cite{AllBrid} considered poles of order greater than two at the punctures as well, and they introduced a more general notion of meromorphic projective structures that could have poles of higher order (see Definition 3.1 in their paper). We deal with the case that all poles are of order greater than two in \cite{GM1, GM2}.  

\subsection{Surface-group representations} 

The \textit{moduli space of $\pslc$-representations} of a surface-group $\Pi$ 
\begin{equation}\label{chip} 
\chi(\Pi) = \text{Hom}(\Pi, \pslc)/\pslc
\end{equation}
is the space of representations $\Pi$ into $\pslc$ up to conjugation, i.e.\ $\rho_1\sim\rho_2$ if and only if $ \rho_2 = g \cdot \rho_1  \cdot g^{-1}$ for some $g\in \pslc$. This is a non-Hausdorff space; to rectify this, usually one considers the quotient  in the following extended sense: $\rho_1\sim\rho_2$ if and only if the closures of their $\pslc$-orbits intersect.  
The latter quotient  coincides with the geometric invariant theory quotient, and yields the \textit{$\pslc$-character variety} -- see \cite{Dolgachev} or \cite{Newstead};  the equivalence classes  are in bijection with \textit{$\pslc$-characters} via the map  $$[\rho] \mapsto \chi_\rho\mathbin{:}\Pi \to \C \text{ defined by } \chi_\rho(\gamma) = \text{tr}^2\rho(\gamma) \text{ for each } \gamma \in \Pi, $$see \cite{Heusener-Porti} for details. \vspace{.15mm}

However, in this article we shall stick with the quotient space \eqref{chip} since the notion of ``degenerate" representations (defined next) is a  condition on conjugacy classes, and not on characters (see Example 1 below, and the remark before that). 
\smallskip

Throughout, we shall identify  $\cp$ as $\partial_\infty \mathbb{H}^3$, the ideal boundary of hyperbolic $3$-space; recall that the action by an element of a M\"{o}bius group on $\cp$ extends to an isometry of $\mathbb{H}^3$. 
Recall that a representation $\rho$ (or its equivalence class in the character variety) is said to be
\begin{itemize}
\item[(i)] \textit{elementary} if either there is a point $p \in \mathbb{H}^3 \cup \cp$, or a pair of points $\{p_1,p_+\} \subset \mathbb{H}^3 \cup \cp $, that are preserved by $\rho(\gamma)$ for each $\gamma \in \Pi$, 
\item[(ii)] \textit{unitary} if there is an interior point $p \in \mathbb{H}^3$ that is fixed by $\rho(\gamma)$ for each $\gamma \in \Pi$, 
\item[(iii)] \textit{non-elementary} if $\rho$ is not elementary,
\item[(iv)] \textit{reducible} if there is a point of $\cp$ that is fixed by $\rho(\gamma)$, for each $\gamma \in \Pi$,  and
\item[(v)] \textit{irreducible} if there is no point of $\cp$ as in (iv). 
\end{itemize}

\smallskip
A key new definition, inspired by the work in \cite{AllBrid} (see Definition \ref{degen} and Proposition \ref{prop1}), is the following:

\begin{defn}\label{bel} A representation  $\rho \in \text{Hom}(\Pi, \pslc)$ is said to be \textit{degenerate} if and only if one of the following properties is satisfied:
\begin{itemize}
\item[(a)] there is a single point $p_0\in \cp$ such that the monodromy around each puncture is a parabolic element with fixed point $p_0$ or the identity element,  and $\rho(\gamma)$ fixes $p_0$ for each $\gamma \in \Pi$.
\item[(b)] there is a pair of points $F=\{p_-, p_+\} \in \cp$ such that the monodromy around each puncture fixes the points in $F$, and $\rho(\gamma)$ preserves the set $F$, for each $\gamma \in \Pi$.  
\end{itemize}
We say $\rho$ is \textit{non-degenerate} if it is not degenerate.  
\end{defn} 

\textit{Remark.} This property is clearly  invariant if $\rho$ is conjugated by any element of $\pslc$.  It is well-known (see, for example, Lemma 3.15 of \cite{Heusener-Porti}) that if $\rho$ is irreducible, then $\rho$ and $\rho^\prime$ define the same point in the \textit{$\pslc$-character variety} if and only if  $\rho$ and $\rho^\prime$ are conjugate representations. However, the following example shows that for the character of a  \textit{reducible} representation may arise from more than one $\pslc$-orbit, where one orbit comprises non-degenerate representations, and another comprises degenerate ones. 

\medskip

\textit{Example 1.} Let $\alpha$ and $\beta$ be the two generators of $\pi_1(S_{0,3})$ (both loops around punctures), and define a representation $\rho_1$  by 
\begin{center}
$\rho_1(\alpha) = \begin{pmatrix}\lambda  & 0 \\ 0 & \lambda^{-1}\end{pmatrix}$ \hspace{.1in}  and \hspace{.1in}  $\rho_1(\beta) = \begin{pmatrix}\lambda  & t(\lambda - \lambda^{-1}) \\ 0 & \lambda^{-1}\end{pmatrix}$ 
\end{center}
where $\lambda \neq 0,1$ and $t\in \mathbb{C}^\ast$. These are then two loxodromic elements of $\pslc$  having \textit{distinct} axes, but with a common fixed point  $\infty \in \cp$.  It is easy to verify that $\rho_1$ is non-degenerate. However, if $g_n = \begin{pmatrix} 1/n & 0 \\ 0 & n\end{pmatrix}$ then  the conjugation $g_n \rho_1 g_n^{-1} \to \rho_0$ as $n\to \infty$ where $\rho_0$ is the degenerate representation defined by $\rho_0(\alpha) = \rho_0(\beta) =  \begin{pmatrix}\lambda  & 0 \\ 0 & \lambda^{-1}\end{pmatrix}$. Since $\rho_0$ lies in the $\pslc$-orbit closure of $\rho_1$, we have $[\rho_0] = [\rho_1]$.  

\medskip

It is easy to see from the preceding definitions that a degenerate representation is an elementary representation; however  the converse is not true, as the following examples show.

\medskip

\textit{Example 2(i).} Consider a \textit{dihedral representation} $\rho_2$ of $\pi_1(S_{0,3})$,  defined by $\rho_2(\alpha) = \begin{pmatrix}\lambda  & 0 \\ 0 & \lambda^{-1}\end{pmatrix}$ and $\rho_2(\beta) = \begin{pmatrix}0  & -\lambda \\  \lambda^{-1} & 0 \end{pmatrix}$. Note that although $\rho_2(\beta)$ preserves $\{0,\infty\}$, it does not fix them; $\rho_2$ is thus elementary but non-degenerate.

\smallskip

\textit{Example 2(ii).} Consider any representation of $\Pi$ that sends the generators to elliptic elements, such that all axes pass through a common point. Such a representation is unitary, and hence elementary, but if there are at least two distinct axes then it will be non-degenerate.  

\medskip
Similarly,  the notion of degenerate is different from reducibility.  In one direction, note that the representation $\rho_1$ in the example above is reducible (since the image group has a global fixed point in $\cp$), but non-degenerate. In the converse direction, we shall construct a representation $\rho_3\mathbin{:}\pi_1(S_{1,2}) \to \pslc$ that is irreducible, but degenerate in the following example:

\smallskip

\textit{Example 3.} Consider the presentation $\pi_1(S_{1,2}) = \langle u,x,y,a,b \ \vert\ a = uxy,\ b=uyx\ \}$ (see \S5.3 of \cite{Goldman-Trace}), and define the representation $\rho_3$  by $\rho_3(u) = \begin{pmatrix}\lambda  & 0 \\ 0 & \lambda^{-1}\end{pmatrix}$ and  $\rho_3(x) = \rho_3(y) =  \begin{pmatrix}0  & -1 \\ 1 & 0\end{pmatrix}$. Note that the images of the generators preserve the pair $\{0,\infty\}$, and the monodromy elements around the punctures, namely  $\rho_3(a)$ and $\rho_3(b)$, fix this pair. Thus, $\rho_3$ satisfies (b) in Definition \ref{bel}; however $\rho_3$ is irreducible as there is no global fixed point. 

\medskip

For \textit{punctured spheres}, however, irreducible does imply non-degenerate:

\begin{lem} Let $k\geq 3$ and let $\rho\mathbin{:}\pi_1(S_{0,k}) \to \pslc$ be a degenerate representation. Then $\rho$ is reducible.
\end{lem}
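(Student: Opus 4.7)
The plan is to reduce the statement to the observation that $\pi_1(S_{0,k})$ is generated by loops around the punctures, and then to unwind the two clauses of Definition \ref{bel} against this generating set.

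First, I would fix a standard presentation $\pi_1(S_{0,k}) = \langle \gamma_1,\ldots,\gamma_k \mid \gamma_1 \gamma_2 \cdots \gamma_k = 1\rangle$, where each $\gamma_i$ is a small loop encircling the $i$-th puncture. In particular, the puncture loops $\gamma_1,\ldots,\gamma_k$ generate $\pi_1(S_{0,k})$. Suppose $\rho$ is degenerate. If $\rho$ satisfies clause (a) of Definition \ref{bel}, then there is a point $p_0 \in \cp$ that is fixed by $\rho(\gamma)$ for every $\gamma \in \Pi$; this is literally the definition of reducible, and we are done.

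The only case that needs argument is clause (b). Here there is a pair $F = \{p_-, p_+\}\subset \cp$ such that $\rho$ preserves $F$, and each puncture-loop monodromy $\rho(\gamma_i)$ \emph{fixes} $F$ pointwise (rather than merely preserving it). Since the puncture loops generate $\pi_1(S_{0,k})$, it follows that every $\rho(\gamma)$ fixes both $p_-$ and $p_+$. In particular, $p_-$ (and also $p_+$) is a global fixed point in $\cp$, so $\rho$ is reducible.

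I expect no serious obstacle: the only subtlety is the distinction in clause (b) between ``preserving the set $F$'' (allowed for arbitrary $\gamma$) and ``fixing each point of $F$'' (required of the puncture loops), and the proof hinges precisely on the fact that for the punctured sphere the puncture loops already generate the whole group, so the ``fixing pointwise'' property propagates to all of $\Pi$. This is why the statement is specific to genus zero; for higher-genus surfaces (as Example 3 in the excerpt shows) one cannot conclude reducibility from degeneracy.
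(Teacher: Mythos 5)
Your proposal is correct and follows exactly the paper's argument: case (a) gives a global fixed point immediately, and in case (b) the pointwise fixing of $\{p_-,p_+\}$ by the puncture loops propagates to all of $\Pi$ because those loops generate $\pi_1(S_{0,k})$. No differences worth noting.
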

\begin{proof} If $(a)$ in Definition \ref{bel} holds, then there is certainly a global fixed point in $\cp$.  If $(b)$ holds, then the monodromy around each puncture fixes the pair $\{p_-,p_+\}$, and since the fundamental group of a punctured sphere is generated by loops around the punctures, the entire monodromy group will fix this pair. 
\end{proof}

\subsection{Framed representations} 

A framed representation $\hat{\rho}$  comprises a representation   $\rho \in \text{Hom}(\Pi, \pslc)$ together with a framing, that is, a choice of a fixed point in $\cp$ (an eigenline in $\C^2$) for the monodromy around each puncture (see \S4.1 of \cite{AllBrid}). Alternatively, as in \cite{FG},  a framing is a $\rho$-equivariant map $\beta_\rho\mathbin{:} F_\infty \to \cp$ where $F_\infty$ or the \textit{Farey set} is the set of points on the ideal boundary that correspond to the lifts of the punctures on $S_{g,k}$. These definitions are equivalent when we consider framed representations up to an action of $\pslc$ (see \eqref{chiph}). For example, the first description assigns a point in $\cp$ to each ideal boundary point of a fundamental domain $F$ in the universal cover of $S_{g,k}$; this extends equivariantly  to define a map $\beta_\rho$. Conversely, restricting the map $\beta_\rho$ to a fundamental domain $F$ assigns a point to each puncture; the equivariance ensures each is a  fixed point of the monodromy around the puncture.  In these constructions, a different choice of a fundamental domain changes $\beta_\rho$ exactly by a post-composition by an element of $\pslc$. 

The following definition is adapted from Definition 4.3 of \cite{AllBrid}: 

\begin{defn}\label{degen} A framed representation $\hat{\rho} = (\rho, \beta_\rho)$ is said to be \textit{degenerate} if either of the following conditions hold:\\
(1) The image of the map $\beta_\rho$ is a single point $p_0 \in \cp$,  the monodromy around each puncture is parabolic with fixed point $p_0$ or the identity element, and $\rho(\gamma)$ fixes $p_0$ for each $\gamma \in \Pi$, or \\
(2) The image of the map $\beta_\rho$ is a pair of points $\{p_-,p_+\} \in \cp$, that is fixed by the monodromy around each puncture, and preserved (i.e.\ fixed or permuted) by $\rho(\gamma)$ for each $\gamma \in \Pi$.
\end{defn}

\textit{Remark.}  In the terminology of \cite{AllBrid}, this is the case of a \textit{closed} surface in their Definition 4.3.  For the general case, there is an additional condition (3) --- see the proof of Theorem \ref{thm4} in \S3.4. 
Our conditions (1), (2) and (3) are equivalent  to (D3), (D2) and (D1) of their definition, respectively. To translate from their terminology to ours, note that (i) a flat section of the bundle arising from the representation, is by definition invariant under parallel transport, and hence preserved by the monodromy, and (ii) choosing a flat section in a neighborhood of a puncture is equivalent to choosing a fixed point of the monodromy around it.

\smallskip

Let  $\widehat{\text{Hom}}({\Pi}, \pslc)$ be the space of framed representations.  We have an action of $\pslc$ on this space defined by  $A\cdot (\rho, \beta_\rho) = (A\rho A^{-1}, A\cdot \beta_\rho)$ for each $A\in \pslc$. 

The \textit{moduli space of framed representations} is 
\begin{equation}\label{chiph}
\widehat{\chi}(\Pi)= \widehat{\text{Hom}}({\Pi}, \pslc)/\pslc
\end{equation} 
where the (possibly non-Hausdorff) quotient is by the action described above.  As for Definition \ref{bel}, the definition of a degenerate \textit{framed} representation as above is well-defined up to conjugation, and hence it makes sense to talk of elements of $\widehat{\chi}(\Pi)$ being degenerate or non-degenerate. The quotient space is $\widehat{\chi}(\Pi)$ in fact an algebraic stack, but we shall ignore this since we shall be interested in the subset $\widehat{\chi}(\Pi)^\ast$ of equivalence classes of \textit{non-degenerate} framed representations that is Zariski open (Lemma 4.5 of \cite{AllBrid}) and a complex manifold (Corollary 9.9 of \cite{AllBrid}). \vspace{.15mm}

We also define the space of \textit{signed} projective structures $\mathcal{P}^\ast_g({k})$ where we choose a sign of the exponent  at each puncture (see \S2.3). This corresponds to a choice of an eigenline in $\mathbb{C}^2$ (or, equivalently, a fixed point in $\cp$) for the monodromy around each puncture. Thus, $\mathcal{P}^\ast_g({k})$ is a branched cover over $\mathcal{P}^\circ_g({k})$, and we have a map $\pi_0: \mathcal{P}^\ast_g({k}) \to \mathcal{P}^\circ_g({k})$ which is generically $2^k$-to-one.

There is a well-defined monodromy map $\widehat{\Phi}: \mathcal{P}^\ast_g({k}) \to  \widehat{\chi}(\Pi)$ that records the usual monodromy $\rho$ on the punctured surface, together with a framing that assigns to each puncture the choice of the eigenline of the monodromy around it, as determined by the sign of the exponent.

Moreover, Allegretti-Bridgeland have proved:

\begin{thm}[Theorem  6.1 of \cite{AllBrid}]\label{ab-thm61} The image of the monodromy map $\widehat{\Phi}$ is contained in the space of non-degenerate framed representations.
\end{thm}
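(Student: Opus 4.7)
Argue by contradiction: assume $\hat\rho = \widehat\Phi(Z)$ is the framed monodromy of a signed projective structure $Z\in\mathcal{P}^\ast_g(k)$ with developing map $f\colon \widetilde S_{g,k}\to\cp$, and that $\hat\rho$ is degenerate.  In each of the two clauses of Definition \ref{degen} I will produce a single-valued meromorphic section of a line bundle on the compactified surface $\bar X = S_g$ whose degree count violates $\chi(S_{g,k})<0$.

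For clause (2), conjugate so that $\{p_-,p_+\}=\{0,\infty\}$, and let $\Pi'\le\Pi$ be the index $\le 2$ subgroup whose image under $\rho$ fixes both $0$ and $\infty$ pointwise.  Every peripheral loop lies in $\Pi'$, so the \'etale cover $\tilde X\to X$ is unramified at every puncture and each puncture of $X$ has exactly $[\Pi:\Pi']$ preimages.  On $\tilde X$, $f\circ\gamma = \lambda_\gamma\cdot f$ for $\gamma\in\Pi'$, so the logarithmic derivative $\omega = df/f$ descends to a meromorphic $1$-form on $\tilde X$.  Since $f$ is a local diffeomorphism into $\cp$ and the local model $f(z)\sim c\,z^{\theta_j}$ at the $j$-th puncture of $\tilde X$ has $\theta_j\ne 0$ (by Lemma \ref{pmon} together with the non-apparent hypothesis), the set $f^{-1}(\{0,\infty\})$ is discrete and does not accumulate at punctures.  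Hence $\omega$ has only simple poles and no zeros on $\bar{\tilde X}$: residue $\theta_j\ne 0$ at each puncture, and residue $\pm 1$ at each point of $f^{-1}(\{0,\infty\})$.  The identity $\deg\,\mathrm{div}(\omega)=2g_{\tilde X}-2$ then rearranges to
\[
|f^{-1}(0)| \,+\, |f^{-1}(\infty)| \;=\; \chi(\tilde X) \;=\; [\Pi:\Pi']\cdot\chi(X),
\]
a negative number equated with a non-negative one -- a contradiction.

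For clause (1), conjugate so $p_0=\infty$; then $\rho(\Pi)\subset\mathrm{Aff}(\C)$ and each peripheral monodromy is a non-trivial translation.  Writing $\rho(\gamma)(w)=\alpha_\gamma w + \beta_\gamma$, the multiplier character $\alpha\colon \Pi\to\C^\times$ is trivial on every peripheral loop and hence factors through $\pi_1(\bar X)$, defining a flat holomorphic line bundle $L$ on $\bar X$ of degree zero.  The $1$-form $df$ on $\widetilde S_{g,k}$ transforms by $\alpha$ under $\Pi$, so it defines a meromorphic section of the twisted bundle $K_{\bar X}\otimes L$.  The local form $f(z)=z^{-\theta_i}+C_i\log z$ (with $\theta_i\in\Z\setminus\{0\}$ and $C_i\ne 0$, as forced by the signed and non-apparent hypotheses) shows that $df$ has a pole of order $\theta_i+1$ at each puncture with $\theta_i\ge 1$, a simple pole at each puncture with $\theta_i\le -1$, a double pole of vanishing residue at each point of $f^{-1}(\infty)/\Pi$ (a finite set, since $f$ being an immersion prevents $f$ from attaining $\infty$ on a punctured neighborhood of any puncture), and no zeros.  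Equating $\deg\,\mathrm{div}(df)=\deg(K_{\bar X}\otimes L)=2\bar g-2$ yields
\[
\sum_{\theta_i\ge 1}(\theta_i+1) \;+\; \#\{i : \theta_i\le -1\} \;+\; 2\,|f^{-1}(\infty)/\Pi| \;=\; 2-2\bar g,
\]
whose left-hand side is at least $k$; since $\chi(S_{g,k})<0$ forces $k>2-2\bar g$, this is a contradiction.

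The main obstacle is clause (1): the multiplier character $\alpha$ may have infinite image in $\C^\times$, so $df$ does not descend to any finite \'etale cover of $X$.  The key is to recognise $df$ as a meromorphic section of the twisted canonical bundle $K\otimes L$ and to invoke the standard fact that every flat holomorphic line bundle on a compact Riemann surface has degree zero.  The careful enumeration of pole orders at the punctures, together with the immersion hypothesis ruling out zeros of $df$, then closes the argument via the Riemann--Roch degree count.
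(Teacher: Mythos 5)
The paper contains no proof of this statement: Theorem \ref{ab-thm61} is imported verbatim from Allegretti--Bridgeland (Theorem 6.1 of \cite{AllBrid}), so your proposal can only be measured against that external argument, and on its own merits it is essentially a correct, self-contained proof in the classical residue-counting style. In clause (2) the logarithmic derivative $\omega = df/f$ descends to the index-$\le 2$ cover fixing both points, has only simple poles (at punctures by the Fuchsian local model, and at $f^{-1}\{0,\infty\}$ since $f$ is an immersion) and no zeros, and the canonical degree identity gives $\lvert f^{-1}(0)\rvert + \lvert f^{-1}(\infty)\rvert = \chi(\tilde X) < 0$, a contradiction; in clause (1), viewing $df$ as a meromorphic section of $K_{\bar X}\otimes L$ with $\deg L = 0$ (flat bundles have holomorphic connections, hence degree zero) is exactly the right device to handle a multiplier character of infinite image, and the pole count at the $k$ punctures against $2-2g < k$ closes the argument. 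Incidentally, in clause (2) you could avoid the double cover altogether: the quadratic differential $(df/f)^2$ is invariant under the full group, since $w\mapsto a/w$ only flips the sign of $dw/w$, and the same count gives $\lvert f^{-1}\{0,\infty\}\rvert = \chi(X) < 0$ directly.

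Three local repairs are needed, none fatal. First, your claim in clause (1) that $\theta_i\in\Z\setminus\{0\}$ is ``forced by the signed and non-apparent hypotheses'' is false: $\theta_i = 0$ is allowed and non-apparent --- it is the logarithmic singularity with parabolic monodromy (see the Remark after Lemma \ref{pmon}, and note that Case 2(i) in \S 3.3 of the paper, $\bar f(w) = \frac{1}{2\pi i}\ln w$ at a cusp, is precisely $\theta = 0$). This is harmless: there $df = (C/z + \mathrm{hol.})\,dz$ still has a simple pole, so each puncture contributes at least $1$ to the pole count and your inequality $\mathrm{LHS}\ge k$ survives; only the enumeration must be amended. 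Second, your stated reason that $f^{-1}(\infty)$ does not accumulate at punctures --- ``$f$ being an immersion prevents $f$ from attaining $\infty$'' --- is not a valid inference (an immersion can a priori hit $\infty$ arbitrarily close to a puncture); the correct reason is the local normal form $f = M\bigl(z^{-n}h(z) + C\log z\bigr)$ with $M$ affine, which is finite on a punctured neighborhood --- the same style of argument you correctly use in clause (2). Third, in both clauses you silently assume the M\"obius ambiguity $M$ in the local model $f = M\circ h$ respects your normalization; this needs one line: the fixed-point set of the peripheral monodromy $M\mu M^{-1}$ is $M(\mathrm{Fix}(\mu))$, so in clause (2) it must equal $\{0,\infty\}$, forcing $M(w) = aw$ or $a/w$ and hence $f = a\,y^{\pm\theta}$ exactly (note also that a nontrivial parabolic cannot fix two points, which is why case (a) with $\theta\notin\Z$, so $\theta\ne 0$, is the only one occurring there), while in clause (1) the parabolic fixed point must be $\infty$, forcing $M$ affine. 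With these lines added, your degree counts go through and the proof is complete.
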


For proving Theorem \ref{thm1}, we would need to consider the representations in the moduli space of representations $\chi (\Pi)$,  that is the image of the forgetful projection $\pi: \widehat{\chi}(\Pi) \to \chi(\Pi)$ defined by $\pi(\rho, \beta_\rho) = \rho$. 

The commutative diagram 
\begin{equation}\label{commd} 
\begin{tikzcd}
\mathcal{P}^\ast_g({k})  \arrow{r}{\widehat{\Phi}} \arrow[swap]{d}{\pi_0} & \widehat{\chi}(\Pi) \arrow{d}{\pi} \\
\mathcal{P}^\circ_g({k})  \arrow{r}{\Phi} & {\chi}(\Pi) 
\end{tikzcd}
\end{equation}
summarizes the relationship between the various spaces.

\subsection{Pleated plane and Fock-Goncharov coordinates}

In what follows let $\widetilde{S}$ denote the universal cover of $S_{g,k}$; it is useful to equip $S_{g,k}$ with a complete hyperbolic metric with finite volume (so that the $k$ punctures are cusps) -- the universal cover $\widetilde{S}$ can then be identified with the Poincar\'{e} disk $\mathbb{D}$.\vspace{.15mm}

Recall that an \textit{ideal triangulation} $T$ of $S_{g,k}$ is a collection of disjoint, homotopically non-trivial arcs with endpoints at the punctures, that divides the surface into triangles. (The terminology arises from the fact that when each arc is homotoped to a geodesic representative in the hyperbolic metric we have fixed, the complementary regions are isometric to ideal triangles in the hyperbolic plane.) 
The ideal triangulation $T$ lifts to an ideal triangulation $\widetilde{T}$ of $\widetilde{S}$, equipped with a  $\Pi$-action.\vspace{.15mm}

We shall consider a  \textit{pleated plane} determined by a map $\Psi: \widetilde{S} \to \HHH^3$ such that each triangle in $\widetilde{T}$ is mapped to a totally-geodesic ideal triangle in $\mathbb{H}^3$.  The edges of $\widetilde{T}$ are then the \textit{pleating/bending lines} of the pleated plane. Since  the pleated plane acquires an orientation from the orientation of $S$ (that is also acquired by the universal cover), we can uniquely define the \textit{bending angle} at each edge of $\widetilde{T}$ in the interval $[0,2\pi)$. 

The pleated plane is said to be $\rho$-equivariant, for a representation $\rho\mathbin{:}\Pi \to \pslc$, if  $\Psi(\gamma\cdot x) = \rho(\gamma) \cdot \Psi(x)$ for each $x\in \widetilde{S}$ and each $\gamma \in \Pi$. 

In this case $\rho$ also acquires a natural framing $\beta_\rho$ that maps each point  $p\in F_\infty$, which is a vertex of a triangle $\Delta$  in $\widetilde{T}$,  to the corresponding  ideal vertex of the ideal triangle $\Psi(\Delta)$.
Then we say that  the framed representation $\hat{\rho}= (\rho, \beta_\rho)$ is the \textit{monodromy} of the pleated plane.\vspace{.15mm}

In that case the  \textit{Fock-Goncharov coordinates} (or \textit{cross-ratio} coordinates)  for the representation $\hat{\rho}$, with respect to the triangulation $T$, are defined as follows:  for any lift $\tilde{e}$ of an edge $e \in T$, consider the two ideal triangles $\Delta_l$ and $\Delta_r$ that have edge $\tilde{e}$; the coordinate associated with that edge is then the cross-ratio of the four points determined by the $\beta_\rho$-images of the vertices of $\Delta_l$ and $\Delta_r$. 
More precisely, if these four vertices  of $T$ are $p_1,p_2,p_3$ and $p_4$ in counter-clockwise order on $\partial \mathbb{D}$, such that the edge $\tilde{e}$ is between $p_1$ and $p_3$, then the cross-ratio of $e$ is defined to be:
\begin{equation}\label{cross}
C(\hat{\rho}, e) = \frac{(z_1 - z_2) (z_3- z_4)}{(z_2- z_3) (z_1 - z_4)}
\end{equation} 
where $z_i = \beta_\rho(p_i)$ for $i=1,2,3,4$. 

The invariance of the cross-ratio under M\"{o}bius transformations, together with the $\rho$-equivariance of the above assignments then implies that each lift of $e$ is assigned the same complex number; hence the Fock-Goncharov coordinates are in fact associated with the edges of $T$ on $S$. 

Note that this definition requires that the four points corresponding to each $\tilde{e}$ are pairwise distinct, so this is only well-defined for a \textit{generic} framed representation. \vspace{.15mm}

\smallskip
We conclude the section with another result of Allegretti-Bridgeland that will be crucial in our proof.
In what follows a \textit{signed} triangulation is an ideal triangulation $T$, together with a choice of a sign ($\pm1$) at each puncture; we shall denote this $k$-tuple by $\epsilon$ (see \S9.3 of \cite{AllBrid}). 
Given a framed representation $\hat \rho$, such a signing $\epsilon$ determines a change of framing as follows: if a puncture is assigned a negative sign, and the monodromy around it is not parabolic or the identity map, then the new framing switches the choice of the fixed point of the monodromy around it. We shall denote this new framed representation by $\epsilon \cdot \hat\rho$. The Fock-Goncharov coordinates for $\hat\rho$  \textit{with respect to a signed triangulation} $(T,\epsilon)$ are then defined to be the cross-ratio coordinates (as above) for this new framed representation -- see Definition 9.5 in \cite{AllBrid}.

\begin{thm}[Theorem 9.1 of \cite{AllBrid}]\label{ab-thm91} Given a non-degenerate framed representation $\hat \rho$, there exists a choice of a signed triangulation $(T, \epsilon)$ of $S$, such that the Fock-Goncharov coordinates of $\hat{\rho}$ with respect to $(T,\epsilon)$ are well-defined and non-zero.
\end{thm}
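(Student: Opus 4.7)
The plan is to start with an arbitrary ideal triangulation $T$ of $S_{g,k}$ and, by combining edge flips with sign changes at punctures, produce a signed triangulation $(T, \epsilon)$ whose associated framed representation $\epsilon \cdot \hat\rho$ has pairwise distinct framings around every edge. First I would fix such a $T$ and an initial sign assignment $\epsilon \equiv +1$; each puncture $p$ then carries a framing $z_p = \beta_{\epsilon \cdot \hat\rho}(p) \in \cp$, namely the chosen fixed point of the monodromy $\rho$ around $p$. For each edge $e \in T$ whose two adjacent triangles span the combined vertex set $\{q_1, q_2, q_3, q_4\}$ (with $e = q_1 q_3$), the coordinate $C(\epsilon \cdot \hat\rho, e)$ from \eqref{cross} is well-defined and non-zero exactly when the four values $z_{q_i}$ are pairwise distinct; I shall call $e$ \emph{degenerate} otherwise.

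Second, given a degenerate edge $e$, I would attempt two local modifications: (a) at any puncture $q_i$ involved in the coincidence whose monodromy is loxodromic or elliptic, flip the sign $\epsilon(q_i)$, which swaps $z_{q_i}$ to the opposite fixed point of the monodromy there; or (b) perform an edge flip of $e$, replacing it by $q_2 q_4$ and thereby altering the quadruples of framings appearing at the affected edges. Iterating these local moves, the aim is to eliminate all degenerate edges.

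Third, to show such a procedure terminates successfully, I would argue by contradiction: assume that no choice of $(T, \epsilon)$ produces an all-non-degenerate system. Then at every signed triangulation some edge carries a coincidence of framings, and using $\rho$-equivariance of the framing together with the connectedness of the flip graph of ideal triangulations (any two ideal triangulations are related by a finite sequence of flips, and any two punctures can be joined by an edge of some triangulation), I would propagate these coincidences to conclude that the entire set of framings -- across all permissible sign changes and all punctures -- must lie in a $\rho(\Pi)$-invariant subset of $\cp$ of cardinality at most two. This forces the image group to preserve that one- or two-point set, with the monodromy around each puncture being either parabolic (in the one-point case) or fixing the pair (in the two-point case), placing $\hat\rho$ in case (1) or (2) of Definition \ref{degen} and contradicting non-degeneracy.

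The main obstacle I anticipate is controlling the combinatorial procedure: edge flips can introduce fresh coincidences elsewhere, so one needs a monovariant (for instance, a weighted count of degenerate edges, or a suitable order on signed triangulations relative to a fixed reference) that strictly decreases under the permitted moves and guarantees termination. The situation is most delicate when many punctures have parabolic monodromy, since sign-flipping is unavailable there and one must exploit the combinatorial flexibility of triangulations -- or, in low-complexity cases such as punctured spheres with few punctures, a direct case analysis -- to reduce the remaining degeneracies to the non-degenerate regime.
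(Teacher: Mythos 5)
You should first be aware that the paper does not prove Theorem \ref{ab-thm91} at all: it is imported as Theorem 9.1 of \cite{AllBrid}, and the entire argument lives in Section 9 of that paper. So there is no internal proof to compare against, and your attempt has to be judged on its own merits against the Allegretti--Bridgeland argument.

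Your proposal has the right contrapositive shape --- one does in essence show that if every signed triangulation carries a coincidence among the four framing values at some edge, then the framed representation is degenerate --- but the step that carries all of the difficulty is precisely the one you leave open. The assertion in your third paragraph, that connectedness of the flip graph plus $\rho$-equivariance lets you ``propagate these coincidences'' until the image of $\beta_\rho$ is forced into a one- or two-point $\rho(\Pi)$-invariant set, is not an argument: a coincidence at one edge of one triangulation only tells you that two specific points of the Farey set $F_\infty$ share a framing value, and converting the totality of such local coincidences (quantified over all triangulations and all sign choices) into the global statement that $\beta_\rho(F_\infty)$ has cardinality at most two is exactly the content of the theorem. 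You would also need more than smallness of the image: condition (1) of Definition \ref{degen} demands that the monodromy around every puncture be parabolic or the identity fixing $p_0$, and condition (2) demands that the pair be fixed by each peripheral monodromy and preserved by all of $\rho(\Pi)$; your sketch does not explain how either of these is extracted from the failure of the combinatorial procedure. Finally, you candidly concede in your last paragraph that you lack the monovariant needed to make the flip-and-sign-change procedure terminate, and that the all-parabolic case (where sign flips are unavailable) is untreated. These are not peripheral worries --- they are the proof. As it stands the proposal is a plausible plan with the central lemma missing; you should either carry out the combinatorial induction of Section 9 of \cite{AllBrid} in full or cite the result rather than claim a proof.
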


\section{Proofs of Theorems \ref{thm1} -- \ref{thm4}}

\subsection{Step 1: framing the representation} 

We first  show: 

\begin{prop}\label{prop1} If $\rho \in   \chi(\Pi) $ is non-degenerate in the sense of Definition \ref{bel}, then for any choice of framing at the punctures the resulting framed representation $\widehat{\rho}$ is non-degenerate in the sense of Definition \ref{degen}. 

Conversely, given a non-degenerate framed representation $\widehat{\rho} \in \widehat{\chi}(\Pi)$, the corresponding representation $\rho \in   \chi(\Pi)$ obtained by forgetting the framing, is non-degenerate. 

\end{prop}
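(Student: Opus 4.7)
The plan is to verify both directions by directly matching the conditions of Definition \ref{bel} against those of Definition \ref{degen}, exploiting the fact that the two notions are engineered to mirror each other. In each direction I would argue by contrapositive, splitting into the two structural cases (a single global fixed point versus an invariant pair of points).

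For the first direction, assume that some framing $\beta_\rho$ renders $\widehat{\rho}=(\rho,\beta_\rho)$ degenerate in the sense of Definition \ref{degen} and read off the conclusion. If condition (1) of Definition \ref{degen} applies, the hypotheses that every puncture monodromy is parabolic with fixed point $p_0$ or the identity, and that $\rho(\gamma)$ fixes $p_0$ for all $\gamma\in \Pi$, are precisely the content of Definition \ref{bel}(a); similarly, if condition (2) applies, the hypothesis that every puncture monodromy fixes both $p_\pm$ pointwise and that $\rho(\gamma)$ preserves $\{p_-,p_+\}$ is precisely Definition \ref{bel}(b). So in either case $\rho$ is degenerate. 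For the converse direction I would start with $\rho$ degenerate and check that any framing is automatically compatible with the degeneracy of $\rho$. In case (a), each puncture monodromy is either parabolic with unique fixed point $p_0$ (forcing the framing to equal $p_0$ at that puncture) or the identity; under the standing hypothesis of nontrivial puncture monodromy used in Theorem \ref{thm1} the identity case cannot occur, so the image of $\beta_\rho$ equals $\{p_0\}$, and the remaining conditions of Definition \ref{degen}(1) are inherited directly from Definition \ref{bel}(a). In case (b), each puncture monodromy fixes both $p_-$ and $p_+$ and is nontrivial, hence loxodromic or elliptic with fixed-point set exactly $\{p_-,p_+\}$, so the framing is forced to take values in this pair and Definition \ref{degen}(2) follows from the remaining conditions in Definition \ref{bel}(b).

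The only obstacle---and really the only substantive point in what is otherwise a tautological side-by-side comparison---is verifying that no extraneous fixed points of the puncture monodromies can creep into the image of $\beta_\rho$, which would spoil the conditions of Definition \ref{degen}(1) or (2). This rests on the standing assumption of nontrivial puncture monodromy: a nontrivial parabolic has a unique fixed point on $\cp$, and a nontrivial M\"obius map fixing two distinct points of $\cp$ has no third fixed point there. Once this observation is recorded, both directions of the proposition drop out of the matching of the two definitions.
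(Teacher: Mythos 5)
Your proposal is correct and follows essentially the same route as the paper: both directions are proved by contrapositive, matching conditions (1) and (2) of Definition \ref{degen} against (a) and (b) of Definition \ref{bel}, and using equivariance to force the framing to land on the distinguished fixed point(s). If anything, you are slightly more explicit than the paper about why no extraneous fixed points can appear (uniqueness of the parabolic fixed point, and the standing non-trivial puncture monodromy assumption ruling out the identity case), which the paper's proof passes over silently.
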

\begin{proof}

If $\hat{\rho}$ is a degenerate framed representation, then the underlying representation $\rho$ is degenerate:
this is because either condition (1) in Definition \ref{degen} is true, in which case (a) holds in Definition \ref{bel}, or condition (2) in Definition \ref{degen}  is true, in which case (b) holds  in Definition \ref{bel}. 
The contrapositive of this establishes the first assertion. 

We note that such a framing exists if  $\rho$ that is non-degenerate and has non-trivial monodromy around each puncture. Indeed,  consider the framing given by a choice of a fixed point in $\cp$ of the monodromy around each puncture, and then extending equivariantly to construct the $\rho$-equivariant map $\beta_\rho\mathbin{:}F_\infty \to \cp$.
This results in a framed representation $\hat{\rho}=(\rho, \beta_\rho)$ which cannot be a degenerate framed representation by the reasoning above.  In other words, $\hat{\rho}$ is a non-degenerate framed representation. Moreover, this is independent of the choice of the fixed point of the monodromy around each puncture  -- see (v) of Remarks 4.4 in \cite{AllBrid}.

For the second statement, let  $\rho$ be a degenerate representation, and let $\beta: F_\infty \to \cp$ be a $\rho$-equivariant map, i.e.\ a framing.  We shall show that the framed representation $\hat\rho = (\rho, \beta)$ is necessarily a degenerate framed representation. Let $\tilde{p} \in F_\infty$ be the lift of a puncture $p$, and let $\gamma$ be a loop around $p$. Note that the deck-translation on $\widetilde{S}$ corresponding to $\gamma$ fixes $\tilde{p}$; by the $\rho$-equivariance of $\beta$, this implies that $\beta(\tilde{p})$ is fixed by $\rho(\gamma)$.  In case $\rho$ satisfies (a) in Definition \ref{bel}, there is a point $p_0 \in \cp$ such that $\rho(\gamma)$ must be a parabolic or identity element fixing $p_0$; hence $\beta(\tilde{p}) = p_0$.  Hence the image of the map $\beta$ is thus a single point, and (1) in Definition \ref{degen} is satisfied.   In case $\rho$ satisfies (b) in Definition \ref{bel}, there is a pair of points $\{p_-,p_+\} \in \cp$ such that $\rho(\gamma)$ fixes one of them; by the same argument as above, we deduce $\beta(\tilde{p}) \in \{p_-,p_+\}$ for each $\tilde{p} \in F_\infty$. Therefore, (2) in Definition \ref{degen} is satisfied.  Thus, in both cases, the framed representation $\hat\rho$ is degenerate in the sense of Definition \ref{degen}, and we have proved the contrapositive of the second assertion.
\end{proof}

The following is then immediate:

\begin{cor}\label{thm1-part}
The image of the (unframed) monodromy map $\Phi \mathbin{:} \mathcal{P}^\circ_g({k}) \to  \chi(\Pi) $ is a subset of the set of non-degenerate representations with non-trivial monodromy around each puncture. 
\end{cor}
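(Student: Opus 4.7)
The plan is to deduce the corollary by chasing the commutative diagram \eqref{commd}, using the Allegretti-Bridgeland result (Theorem \ref{ab-thm61}) and the second assertion of Proposition \ref{prop1}. The two desired conclusions — non-trivial puncture monodromy and non-degeneracy — separate naturally, with the first being essentially built into the definition of $\mathcal{P}^\circ_g(k)$ and the second reducing to the framed analogue.

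First I would take a representation $\rho \in \Phi(\mathcal{P}^\circ_g(k))$ and choose a preimage $P \in \mathcal{P}^\circ_g(k)$ with $\Phi(P) = \rho$. By the very definition of $\mathcal{P}^\circ_g(k)$ (requirement (ii) in \S2.3 — no apparent singularities), the monodromy of $P$ around each puncture is non-trivial, which takes care of the puncture-monodromy half of the claim.

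Next, I would lift $P$ through the generically $2^k$-to-one branched cover $\pi_0: \mathcal{P}^\ast_g(k) \to \mathcal{P}^\circ_g(k)$ to a signed projective structure $P^\ast \in \mathcal{P}^\ast_g(k)$ (any choice of signs at the punctures will do; here one uses that the exponent at each pole is nonzero, as noted after Lemma \ref{pmon}, so a sign can indeed be assigned). Commutativity of \eqref{commd} then gives $\pi(\widehat{\Phi}(P^\ast)) = \Phi(\pi_0(P^\ast)) = \Phi(P) = \rho$, so $\widehat{\Phi}(P^\ast) = (\rho, \beta_\rho)$ is a framed lift of $\rho$ in $\widehat{\chi}(\Pi)$.

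Finally, Theorem \ref{ab-thm61} guarantees that this framed representation $(\rho, \beta_\rho)$ is non-degenerate in the sense of Definition \ref{degen}. Invoking the second (contrapositive) assertion of Proposition \ref{prop1} then forces the underlying representation $\rho$ to be non-degenerate in the sense of Definition \ref{bel}, completing the proof. No step here is genuinely hard — the content is entirely in Theorem \ref{ab-thm61} and Proposition \ref{prop1}, both already established; the only thing to be slightly careful about is that a signed lift of $P$ actually exists, which is why the absence of apparent singularities (ensuring nonzero exponents at every puncture) is used twice: once for the puncture-monodromy conclusion and once to make the lift to $\mathcal{P}^\ast_g(k)$ meaningful.
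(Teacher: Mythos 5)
Your proposal is correct and follows essentially the same route as the paper: lift to a signed structure, apply Theorem \ref{ab-thm61} to get non-degeneracy of the framed monodromy, descend via the second assertion of Proposition \ref{prop1} and the commutative diagram \eqref{commd}, and read off non-trivial puncture monodromy from the definition of $\mathcal{P}^\circ_g(k)$. Your explicit attention to the existence of the signed lift (via the nonvanishing of the exponents) is a point the paper leaves implicit but is the same underlying argument.
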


\begin{proof}
By Theorem \ref{ab-thm61} we know that the monodromy of any signed meromorphic projective structure in $\mathcal{P}^\ast_g(k)$ is a non-degenerate framed representation in $\widehat{\chi}(\Pi)$. By the second statement of the previous Proposition, the  representation in $\chi(\Pi)$ obtained by forgetting the framing, is non-degenerate. This is the image of the map $\Phi$ obtained by forgetting the signs at the punctures (see \eqref{commd}).  Since we have already assumed that the $\cp$-structures in $\mathcal{P}^\circ_g(k)$ have no apparent singularities, each puncture has non-trivial monodromy around it.\end{proof}

\subsection{Step 2: straightening the pleated plane}

Let $\hat\rho$ be the non-degenerate framed representation obtained in Step 1.

 By Theorem \ref{ab-thm91}, there is an an ideal triangulation $T$ and a signing $\epsilon$ that defines a new framing by switching the choice of fixed points of the monodromy around some of the punctures, such that the cross-ratio coordinates of the resulting framed representation $\epsilon \cdot \hat\rho$  with respect to $T$ are well-defined and non-zero.  In what follows, we shall continue to refer to this new framed representation by $\hat\rho$.  (Note that this is still a non-degenerate framed representation -- see Remarks 4.4 (v) of \cite{AllBrid}.)

Given this non-degenerate framed representation $\hat\rho$, and the ideal triangulation $T$ of the surface $S$ as above, we can build a pleated plane $\Psi:\widetilde{S} \to \HH^3$  as follows: for any triangle $\Delta$ in the lifted triangulation $\widetilde{T}$ in the universal cover $\widetilde{S}$, the vertices of $\Delta$ are three distinct elements $a,b,c\in F_\infty$. Since the Fock-Goncharov coordinate of any edge of $T$ is well-defined and non-zero, the image points $\beta_\rho(a), \beta_\rho(b), \beta_\rho(c)$ are three distinct points in $\cp$, and we define $\Psi$ to map $\Delta$ to the ideal triangle in $\HH^3$  with those three points as ideal vertices.  (Note that if we only know the conjugacy class of $\hat\rho$, i.e. as a point of the moduli space of framed representations, then $\Psi$ is well-defined up to post-composition by an isometry of $\HH^3$.) \vspace{.15mm}

We can define:

\begin{defn}[Straightening]\label{straight}  Recall that given a $\rho$-equivariant pleated plane $\Psi: \widetilde{S} \to \HH^3$ as above,  pleated along edges of a triangulation $\widetilde{T}$, each edge $\tilde{e}$ is assigned a complex cross-ratio $c(\tilde{e}) \in \C^\ast$.  We  define the \textit{straightened} plane  as a new map $\overline{\Psi}:\widetilde{S} \to \HH^3$ that is obtained by the assignment of \textit{real} cross-ratios $\{ \lvert c(\tilde{e})\rvert \text{ } \vert \text{ } e \in \widetilde{T} \}$ to each edge of the triangulation. Note that the image of the resulting pleated plane $\overline {\Psi}$ lies on the totally-geodesic equatorial plane $\mathbb{H}^2$.  If $\overline{\Psi}$ is equivariant with respect to the action of a Fuchsian group $\Gamma$ on  $\mathbb{H}^2$, we shall say that the straightened plane is the developing map of the hyperbolic surface $\mathbb{H}^2/\Gamma$. 
\end{defn}

In this section we shall prove:

\begin{prop}\label{prop2}  Let $\Psi$ be the pleated plane obtained from $\hat\rho$ as above. 
Then the  straightened plane $\overline{\Psi}$ is the developing map of a hyperbolic surface $\widehat{S}$ homeomorphic to $S_{g,k}$, such that each puncture corresponds to either a cusp or a geodesic boundary component.

Moreover, the pleating lines descend to a measured lamination $\lambda$ on $\widehat{S}$ that is a collection of weighted geodesic arcs, where the weight equals the ``bending angle" at the pleat. We include the geodesic boundary components as part of $\lambda$,  each with infinite weight (in this case there will be leaves of  $\lambda$ that spiral on to these boundary components). 
\end{prop}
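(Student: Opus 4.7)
My strategy is to interpret the positive real cross-ratios $|c(\tilde e)|$ as exponentiated shear parameters for an ideally triangulated hyperbolic structure, build a Fuchsian representation $\bar\rho : \Pi \to \pslr$ directly from these shears so that $\overline\Psi$ becomes its developing map, and then analyze each puncture locally to decide whether it descends to a cusp or a geodesic boundary.

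\emph{Step 1 (build $\bar\rho$ and $\overline\Psi$).} Since the Fock--Goncharov number $c$ is a M\"{o}bius invariant of its four vertices and $\beta_\rho$ is $\rho$-equivariant, $c$ descends to an honest non-zero complex function on the edges of $T \subset S$; hence $s(e) := \log|c(e)| \in \mathbb{R}$ is a well-defined real shear parameter for each edge. I would then fix a base triangle $\Delta_0 \in \widetilde T$, realize $\overline\Psi(\Delta_0)$ as any ideal triangle in the equatorial plane $\mathbb{H}^2 \subset \mathbb{H}^3$, and extend $\overline\Psi$ inductively: if $\Delta,\Delta' \in \widetilde T$ share an edge $\tilde e$ and $\overline\Psi(\Delta)$ has been placed, then $\overline\Psi(\Delta')$ is the unique ideal triangle on the opposite side of $\overline\Psi(\tilde e)$ whose third vertex is displaced from the third vertex of $\overline\Psi(\Delta)$ by exactly the shear $s(e)$. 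Descent of $s$ to $T$ forces this construction to be $\Pi$-equivariant for a unique homomorphism $\bar\rho : \Pi \to \pslr$.

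\emph{Step 2 (classify each puncture).} Fix an ideal vertex $\tilde p \in F_\infty$ over a puncture $p$ and a generator $\alpha_p$ of its stabilizer in $\Pi$. Equivariance gives $\bar\rho(\alpha_p) \cdot \overline\Psi(\tilde p) = \overline\Psi(\tilde p)$, so $\bar\rho(\alpha_p)$ is parabolic, hyperbolic, or the identity; I would rule out the identity by observing that otherwise the infinite fan of triangles of $\widetilde T$ around $\tilde p$ would develop periodically, contradicting disjointness of their interiors. In the parabolic case the fan closes up into a horocyclic neighborhood of $\overline\Psi(\tilde p)$, and $p$ descends to a cusp. In the hyperbolic case, letting $\gamma_p \subset \mathbb{H}^2$ be the axis of $\bar\rho(\alpha_p)$ (one of whose endpoints is $\overline\Psi(\tilde p)$), the fan develops as an infinite chain of ideal triangles sharing the vertex $\overline\Psi(\tilde p)$ whose opposite edges accumulate onto $\gamma_p$; the quotient of a one-sided neighborhood of $\gamma_p$ by $\langle\bar\rho(\alpha_p)\rangle$ is then a geodesic boundary component of $\widehat{S}$ onto which the edges of $T$ incident to $p$ spiral.

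\emph{Step 3 (assemble $\widehat{S}$ and $\lambda$; main obstacle).} A standard Poincar\'{e}-polygon argument applied to a $\Pi$-fundamental domain built out of triangles of $\widetilde T$ should then show that $\bar\rho$ acts properly discontinuously, with quotient $\widehat{S} = \mathbb{H}^2/\bar\rho(\Pi)$ homeomorphic to $S_{g,k}$ and with each puncture completed by either a cusp or a geodesic boundary as in Step 2. The edges of $T$ descend to a pairwise disjoint family of geodesic arcs on $\widehat{S}$; weighting each by the bending angle $\theta(e) := \arg c(e) \in [0, 2\pi)$ (well-defined on $T$ by the same descent argument) and adjoining each geodesic boundary component with infinite weight then produces the measured lamination $\lambda$. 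The hardest point will be the hyperbolic case of Step 2: verifying that the infinite fan of triangles truly accumulates onto a single axis, and that the resulting spiraling structure extends smoothly to a genuine geodesic boundary component in the quotient. This will require a quantitative formula expressing the translation length of $\bar\rho(\alpha_p)$ in terms of the shears along the fan at $p$, together with an invocation of the standard theory of hyperbolic surfaces with geodesic boundary.
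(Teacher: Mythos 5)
Your proposal is correct and follows essentially the same route as the paper: straighten by replacing each cross-ratio $c(e)$ with $\lvert c(e)\rvert$, develop the ideal triangles into the equatorial $\mathbb{H}^2$, classify each puncture by the type of the stabilizer's image (parabolic giving a cusp, hyperbolic giving a spiraling geodesic boundary), and assemble $\lambda$ from the arguments $\arg c(e)$ plus infinite-weight boundary components. The ``hardest point'' you flag is exactly what the paper's Lemma \ref{toy} and Corollary \ref{cor-toy} supply: the monodromy of the straightened fan at a puncture is explicitly $z\mapsto \lvert s_0\cdots s_{n-1}\rvert z + c$ with $c\in\mathbb{R}^+$ (so never the identity), giving translation length $\sum_i \ln\lvert s_i\rvert$, and the positivity of the real cross-ratios is what guarantees adjacent developed triangles do not fold over, so the fan embeds and accumulates on the axis.
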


\medskip 
 
The following lemma concerning a toy example, shall be useful in the proof of Proposition \ref{prop2}, since it will serve as a model for what happens in a neighborhood of a puncture. For this, we introduce the following terminology:\vspace{.15mm}

Let $\widetilde{A} \subset \mathbb{H}^2$ be the subset comprising infinitely many ideal triangles $\{ \Delta_i\}_{i\in \mathbb{Z}}$ enclosed by the collection of lines $\widetilde{T} = \{ \text{Re}(z) = m \text{ } \vert \text{ } m \in \mathbb{Z} \}$ together with the semi-circles orthogonal to $\partial \HH^2$ having endpoints on pairs of consecutive integers.  We shall consider the infinite cyclic group $\langle \tau_n \rangle$ acting on $\widetilde{A}$, where  $\tau_n$ is the translation $z\mapsto z + n$ for some integer $n\geq 1$.  The quotient $\widetilde{A}/\langle \tau_n\rangle$ will be referred to as an \textit{annular end}. Note that $\widetilde{T}$ descends to  an  ideal triangulation $T$ of the annular end,  comprising $n$ ideal triangles. 

A \textit{pleated annular end}  is an equivariant map $\psi\mathbin{:} \widetilde{A} \to \mathbb{H}^3$, such that the image of each triangle $\Delta_i$ is a totally-geodesic ideal triangle in $\HH^3$. Here, the map is equivariant with respect to the $\mathbb{Z}$-action on $\widetilde{A}$ generated  by the translation $\tau_n$ as above, and the action on $\HH^3$ by the cyclic group generated by a M\"{o}bius map $M\in \pslc$ which is said to be the \textit{monodromy} of the annular end.  

As in \S2.5, a pleated annular end as above can be recovered from the ordered tuple of  $n$ complex cross-ratios associated with the edges  of the ideal triangulation $T = \{  \text{Re}(z) = m \text{ } \vert \text{ } m=0,1,2\cdots, n-1\}$. We shall assume the pleated annular end is \textit{generic},  so that these $n$ complex cross-ratios are all non-zero. 

A straightening of a pleated annular end can then be defined exactly as in Definition \ref{straight}. The image of the resulting new map $\overline{\Psi}$ then lies on the equatorial plane, which is a totally geodesic copy of $\mathbb{H}^2$.  We shall assume that the induced orientation on the ideal boundary $\partial \mathbb{H}^2 = \mathbb{R}\cup \{\infty\}$ is the standard (increasing) ordering on $\mathbb{R}$. Moreover, the map $\overline{\Psi}$  is then equivariant with respect to the $\mathbb{Z}$-action on $\widetilde{A}$, as before, and an action on $\mathbb{H}^2$ (the equatorial plane) by the cyclic group generated by a \textit{real} M\"{o}bius transformation, i.e.\ an element $\overline{M} \in \pslr$.

The monodromy of the pleated annular end and its straightening can be determined as follows:

\begin{lem}\label{toy}  Consider a generic pleated annular end as above, and let $s_0,s_1,\ldots s_{n-1}$ be the complex cross-ratios associated with the $n\geq 1$ edges of the ideal triangulation $T$. Then, the monodromy $M\in \pslc$ of the annular end is 
\begin{enumerate}
\item loxodromic, if $\sum\limits_i \ln \lvert s_i \rvert  \neq 0$ 
\item parabolic or identity, if $\sum\limits_i \ln \lvert s_i \rvert = 0$ and  $\sum\limits_i Arg(s_i) \in 2\pi \mathbb{Z}  $, and 
\item elliptic, if $\sum\limits_i \ln \lvert s_i \rvert = 0$ but $\sum\limits_i Arg(s_i) \notin 2\pi \mathbb{Z} $. 
\end{enumerate} 
\end{lem}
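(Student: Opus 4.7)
The plan is to normalize so that the common ideal vertex of all image triangles --- namely the image of the puncture under the framing --- is placed at $\infty \in \cp$. After this normalization, the monodromy $M$ fixes $\infty$ and has the form $M(z) = \alpha z + \beta$ for some $\alpha \in \C^*$, $\beta \in \C$. The classification of $M$ is dictated entirely by $\alpha$: the map is loxodromic iff $|\alpha| \neq 1$; it is parabolic or the identity iff $\alpha = 1$ (the identity occurring exactly when $\beta = 0$); and it is elliptic iff $|\alpha| = 1$ with $\alpha \neq 1$ (in which case the second fixed point is $\beta/(1-\alpha)$). The key algebraic identity to establish is therefore
\begin{equation*}
\alpha = \prod_{i=0}^{n-1} s_i,
\end{equation*}
from which the three cases of the lemma follow immediately by taking the logarithm and separating modulus and argument.

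To prove this identity, I would label the finite ideal vertices of the image triangles as $z_0, z_1, \ldots, z_n$ so that $\Psi(\Delta_i)$ has ideal vertices $\infty, z_i, z_{i+1}$ for $i = 0, \ldots, n-1$, and so that $z_n = M(z_0) = \alpha z_0 + \beta$ by equivariance of $\Psi$. Applying the cross-ratio formula \eqref{cross} to the four vertices $\{z_{i-1}, z_i, z_{i+1}, \infty\}$ of the two triangles flanking an interior edge $e_i$ (for $1 \leq i \leq n-1$), with the counter-clockwise ordering on $\partial \HHH^2 = \cp$ inherited from the orientation of $S$, the infinity-containing factors cancel in a $\pm 1$ limit and one obtains
\begin{equation*}
s_i = \frac{z_{i+1} - z_i}{z_i - z_{i-1}}.
\end{equation*}
The product of these expressions over $i = 1, \ldots, n-1$ then telescopes to $(z_n - z_{n-1})/(z_1 - z_0)$.

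For the remaining edge $e_0$, the triangle on the other side is $M^{-1}(\Psi(\Delta_{n-1}))$, which has ideal vertices $\infty, M^{-1}(z_{n-1}), z_0$. The same computation yields $s_0 = (z_1 - z_0)/(z_0 - M^{-1}(z_{n-1}))$, and substituting $M^{-1}(z_{n-1}) = (z_{n-1} - \beta)/\alpha$ together with $\alpha z_0 + \beta = z_n$ rearranges the denominator as $z_0 - M^{-1}(z_{n-1}) = (z_n - z_{n-1})/\alpha$. Multiplying the telescoped product by this $s_0$ then cancels the telescoped numerator against its denominator, leaving $\prod_{i=0}^{n-1} s_i = \alpha$, as required.

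The main bookkeeping obstacle is ensuring that the sign and orientation conventions in \eqref{cross} --- the counter-clockwise ordering and the designation of which diagonal pair of vertices is $\{p_1, p_3\}$ --- are applied consistently at each edge so that the formula for $s_i$ comes out with the correct sign; once this is fixed, the remaining work is straightforward algebra. Given the genericity hypothesis that no $s_i$ vanishes, the three cases of the lemma then follow from the observation that $\ln|\alpha| = \sum_i \ln|s_i|$ and $\arg(\alpha) \equiv \sum_i \arg(s_i) \pmod{2\pi}$, which directly produce the trichotomy loxodromic / parabolic-or-identity / elliptic as stated.
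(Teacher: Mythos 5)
Your proposal is correct and follows essentially the same route as the paper: normalize the common ideal vertex to $\infty$, observe that the monodromy is then an affine map $z\mapsto \alpha z+\beta$ with multiplier $\alpha=\prod_i s_i$, and read off the loxodromic/parabolic/elliptic trichotomy from $|\alpha|$ and $\arg\alpha$. The only (cosmetic) difference is that the paper obtains $\alpha=\prod_i s_i$ by composing the per-edge gluing isometries $z\mapsto s_i z+b_i$, whereas you derive it by telescoping the explicit cross-ratio expressions $s_i=(z_{i+1}-z_i)/(z_i-z_{i-1})$.
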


\begin{proof}
It is best to work in the upper-half-space model of $\HHH^3$, such that the image of the ideal vertex at $\infty$ in $\widetilde{A}$ is $p_0=\infty$ in $\HH^3$.  See Figure 2.

 Let $\Delta_0,\Delta_1, \ldots \Delta_{n-1}$ be the totally geodesic ideal triangles in the $\Psi$-image of a fundamental domain of the $\mathbb{Z}$-action on $\widetilde{A}$. 
 
A computation using our definition of cross-ratio in \eqref{cross} shows that $Arg(s_i)$ equals the dihedral angle between $\Delta_i$ and $\Delta_{i+1}$, and $\lvert s_i \rvert$ is the (multiplicative) factor for the gluing along the geodesic line common to the two ideal triangles. (Equivalently, $\ln \lvert s_i \rvert$ is the shear parameter between the two triangles.) 

Suppose the vertices of $\Delta_i$ are $\infty, p_2$ and $p_3$  and the vertices of $\Delta_{i+1}$ are $\infty,p_3$ and $p_4$ such that the common geodesic side is between $\infty$ and $p_3$. Then  the isometry of $\HH^3$  that takes $\Delta_i$ to $\Delta_{i+1}$ mapping $p_2$ to $p_3$ and fixing $\infty$,  extends to an automorphism of the boundary $\C$  of the upper half-space,  and has the form $z\mapsto s_i z + b_i$ for some $b_i \in \C$.

The monodromy $M$ then extends the composition of such maps for $0 \leq i\leq n-1$, which is a map of the form $z \mapsto s_0s_1\cdots s_{n-1} z+ b$ for some $b\in \C$. 

The classification (1)-(3) is then immediate.  \end{proof}

\begin{figure}
  \centering
  \includegraphics[scale=0.45]{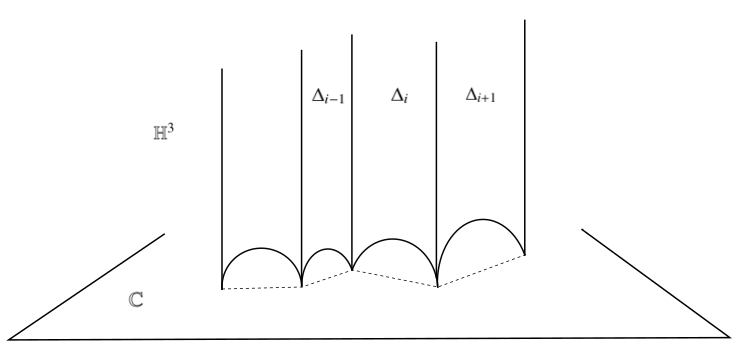}
  \caption{A pleated annular end in the upper half-space model of $\HH^3$. }
\end{figure}

\begin{cor}\label{cor-toy} Consider a pleated annular end comprising $n$ ideal triangles as above, determined by the cross-ratios $s_0,s_1,\ldots, s_{n-1}$. Then the straightened annular end, determined by the real cross-ratios $\lvert s_0\rvert ,\lvert s_1\rvert,\ldots, \lvert s_{n-1}\rvert$,  has monodromy $\overline{M}$ that is either 
\begin{enumerate}
\item[(i)] a parabolic isometry, in the  case $\sum\limits_i \ln \lvert s_i \rvert  = 0$, or 
\item[(ii)] a hyperbolic isometry of translation length $\sum\limits_i \ln \lvert s_i \rvert  \neq 0$. 
\end{enumerate} 
\end{cor}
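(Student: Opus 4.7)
The plan is to derive the corollary as an immediate specialization of Lemma \ref{toy} to the tuple of positive real cross-ratios $\lvert s_0\rvert,\lvert s_1\rvert,\ldots,\lvert s_{n-1}\rvert$ which, by Definition \ref{straight}, is what determines the straightened annular end. Since each $\lvert s_i \rvert \in \mathbb{R}_{>0}$, we have $Arg(\lvert s_i \rvert) = 0$, and hence $\sum_i Arg(\lvert s_i\rvert) = 0 \in 2\pi\mathbb{Z}$. Consequently case (3) of Lemma \ref{toy} is automatically excluded, leaving exactly the dichotomy claimed in the corollary: case (1) of the lemma gives (ii) when $\sum_i \ln\lvert s_i\rvert \neq 0$, and case (2) gives (i) when $\sum_i \ln \lvert s_i\rvert = 0$.

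The one point that requires a brief argument is the upgrade from ``loxodromic'' to ``hyperbolic'' in (ii). By construction all bending angles of $\overline{\Psi}$ vanish, so the image of the straightened plane lies entirely in the totally-geodesic equatorial copy of $\mathbb{H}^2\subset \mathbb{H}^3$. Equivariance then forces the monodromy $\overline{M}$ to preserve this $\mathbb{H}^2$, so $\overline{M}\in \pslr$. A loxodromic element of $\pslc$ that preserves a totally-geodesic hyperbolic plane has its two fixed points on $\partial\mathbb{H}^2=\mathbb{R}\cup\{\infty\}$ and no rotational component, so it is hyperbolic.

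To read off the translation length, I would retrace the computation in the proof of Lemma \ref{toy} with the real cross-ratios in place of the complex ones. There, the composition of the edge-gluing maps produced a monodromy of the form $z\mapsto (s_0s_1\cdots s_{n-1})\,z + b$; in the straightened situation this becomes $z\mapsto \mu\, z + b$ with $\mu = \lvert s_0\rvert\lvert s_1\rvert\cdots \lvert s_{n-1}\rvert > 0$ and $b\in\mathbb{R}$ (after suitably normalising so the common fixed vertex $p_0$ of the $\Psi$-image is sent to $\infty$). In the upper half-plane model a real affine map $z\mapsto \mu z + b$ with $\mu>0$, $\mu\neq 1$, is a hyperbolic isometry whose translation length along its axis is $\lvert \ln\mu\rvert = \bigl\lvert \sum_i \ln\lvert s_i\rvert \bigr\rvert$, matching the expression stated in the corollary (with the sign convention of the paper).

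There is no substantial obstacle here: everything follows mechanically once it is observed that straightening annihilates the dihedral angles and places the image in a single copy of $\mathbb{H}^2$. The only bookkeeping is to confirm the explicit form of $\overline{M}$ so that the translation length, and not merely the qualitative classification, can be read off from Lemma \ref{toy}.
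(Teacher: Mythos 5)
Your proposal follows essentially the same route as the paper: the paper's own proof also reduces to the computation in Lemma \ref{toy}, redone with the real cross-ratios, and arrives at the explicit form $z\mapsto \lvert s_0 s_1\cdots s_{n-1}\rvert\, z + c$ for $\overline{M}$ acting on the upper half-plane, from which both the hyperbolic/parabolic dichotomy and the translation length $\bigl\lvert\sum_i \ln\lvert s_i\rvert\bigr\rvert$ are read off. The one point you leave open is the case $\sum_i \ln\lvert s_i\rvert = 0$: Lemma \ref{toy}(2) only yields ``parabolic \emph{or identity},'' and your explicit form $z\mapsto \mu z + b$ has $b\in\mathbb{R}$ unconstrained, so you have not excluded $\overline{M}=\mathrm{id}$, whereas the corollary (and its use afterwards, where case (i) must produce a genuine cusp) requires a nontrivial parabolic. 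The paper closes this by checking that each elementary gluing map is $z\mapsto \lvert s_i\rvert z + c_i$ with $c_i\in\mathbb{R}^{+}$ --- a consequence of the positivity of the real cross-ratios, which simultaneously shows that consecutive straightened triangles do not fold over and hence that the image of $\widetilde{A}$ is embedded in $\mathbb{H}^2$ --- so the composite has constant term $c>0$ and is never the identity. Adding that one observation (positivity of the translation parts, hence $b>0$) completes your argument; everything else is correct.
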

\begin{proof}
Let $\Delta_0^\prime ,\Delta_1^\prime, \ldots \Delta_{n-1}^\prime$ be the totally geodesic ideal triangles in the $\overline{\Psi}$-image of a fundamental domain of the $\mathbb{Z}$-action on $\widetilde{A}$, and more generally  let $\Delta_i^\prime$ be the image of the ideal triangle $\Delta_i$ in  $\widetilde{A}$.  These are ideal triangles on $\mathbb{H}^2$ (the equatorial plane) with a common ideal vertex.  
Consider two such adjacent ideal triangles $\Delta_-$ and $\Delta_+$ that are images of successive ideal triangles in $\widetilde{A}$;  for ease of computation assume they have a common ideal vertex at $\infty$, a common geodesic line  between $\infty$ and $1$, and the  two other ideal vertices of $\Delta_-$ and $\Delta_+$ are $0$ and $\lambda \in \mathbb{R}$ respectively.  Then a calculation shows that the cross-ratio (as defined in \eqref{cross}) assigned to the common edge is $\lambda -1$. From this it follows that whenever the  real cross-ratio is positive, then the pair of ideal  triangles that is the image of two adjacent ideal triangles in $\widetilde{A}$, is embedded in $\mathbb{H}^2$ (i.e.\ they don't fold over). In our case, each real cross-ratio that determines the gluing of  $\Delta_i^\prime$ to $\Delta_{i+1}^\prime$  is positive for each $i$, so the entire image of $\widetilde{A}$ is  embedded in $\mathbb{H}^2$.  Let $\psi_i$ be the isometry of $\HH^2$  that takes $\Delta_i^\prime$ to $\Delta_{i+1}^\prime$  fixing the common ideal vertex $\infty$ and taking the other geodesic side of $\Delta_i^\prime$ containing $\infty$, to the common geodesic side between the two triangles, exactly as in the proof of the Lemma above. Then, in the upper half-plane model for $\mathbb{H}^2$, the isometry $\psi_i$  is  of the form $z\mapsto \lvert s_i \rvert z + c_i$ for some $c_i \in \mathbb{R}^+$.  (For instance, for the pair of ideal triangles $\Delta_\pm$ as above, the specified  isometry taking $\Delta_-$ to $\Delta_+$ is of the form $z\mapsto (\lambda -1) z +1$.) The monodromy $\overline{M}$ is then a composition of such maps across the triangles in the fundamental domain, which yields a map of the form $z \mapsto \lvert s_0s_1\cdots s_{n-1} \rvert z+ c$ for some $c\in \mathbb{R}^+$.
\end{proof}

\textit{Remark.} In case of (i) above, the straightened annular end is in fact the developing map of a cusp. In the case (ii), it is the developing map of an incomplete end whose completion has a geodesic boundary of length $\sum\limits_i \ln \lvert s_i \rvert  \neq 0$. (See, for example, Chapter 7.4 of \cite{Martelli}.) \\

\smallskip 

\begin{proof}[Proof of Proposition \ref{prop2}]

Let $\overline{\Psi}\mathbin{:} \widetilde{S} \to \HH^3$ be the straightened pleated plane. As a consequence of Definition \ref{straight}, $\overline{\Psi}$ is the developing map of a hyperbolic structure on $S_{g,k}$ obtained by gluing the ideal triangles of the triangulation $T$, where the real cross-ratios associated with the edges of $T$ determine the gluing. We call this hyperbolic structure $\widehat{S}$.

For the $i$-th puncture, consider the edges $e_0,e_1,\ldots, e_{n-1}$ of $T$ incident on it, and consider a lift of each of these edges in $\widetilde{S}$ that are all incident to  the same ideal vertex in $F_\infty$.  By Corollary \ref{cor-toy}, it follows that 
$i$-th puncture corresponds to either a cusp (if (i) holds, that is, the sum of the real shear parameters for $e_i$ for $0\leq i\leq n-1$ is zero) or a geodesic boundary component (if (ii) holds).

To each edge $e$ of the ideal triangulation $T$ on $\widehat{S}$,  we assign a weight $\alpha(e) \in [0, 2\pi)$ that is the argument of the complex cross-ratio associated with edge. (In fact $\alpha(e)$ also equals the dihedral angle at $\Psi(e)$ between the two totally-geodesic ideal triangles in $\HH^3$ that are the $\Psi$-images of the adjacent ideal triangles.)  When case (ii) of Corollary \ref{cor-toy} holds above,  the corresponding  geodesic lines of  the lifted ideal triangulation $\tilde{T}$ accumulate on to the lift of the geodesic boundary component of $\widehat{S}$. In the quotient surface $\widehat{S}$, these descend to finitely many leaves that spiral on to that boundary geodesic.  The subset of  $T$ comprising geodesic lines equipped with positive weights as defined above, together with the closed geodesic boundary components (each with infinite weight), form a measured lamination on $\widehat{S}$. 
\end{proof}

\subsection{Step 3: obtaining the meromorphic projective structure}

Let $\widehat{S}$ and $\lambda$ be the hyperbolic surface and measured geodesic lamination, respectively, as obtained from Proposition \ref{prop2} in Step 2. 

We shall now prove:

\begin{prop}\label{prop3} The $\cp$-structure obtained by grafting $\widehat{S}$ along $\lambda$ is a meromorphic projective structure in  $\mathcal{P}_g(k)$.
\end{prop}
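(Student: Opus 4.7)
The plan is to construct the grafted $\cp$-structure explicitly, treating the two types of boundary leaves of $\lambda$ from Proposition \ref{prop2} separately, and then to verify the pole-of-order-at-most-two condition at each puncture by a local normal-form analysis. For leaves of $\lambda$ of finite positive weight (in particular, the pleating arcs emanating from cusp punctures of $\widehat{S}$), grafting is the standard Thurston construction of \S2.2: at each lift in the developing image of $\widehat{S}$ one inserts an embedded lune whose angle equals the weight, and the monodromy is deformed accordingly. For the geodesic boundary components carrying infinite weight, I would define the grafting as the limit of $\alpha\gamma$-grafting as $\alpha \to \infty$: each lift $\tilde\gamma$ is replaced in the developing image by a half-plane whose two boundary semicircles share common endpoints at the two fixed points of the loxodromic monodromy around $\gamma$. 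At the surface level this collapses the geodesic boundary to a puncture whose neighborhood is conformally a punctured disk (the quotient of the inserted half-plane by the loxodromic element). By construction, the combined pleated image coincides with the plane $\Psi$ of Step 2, so the grafted structure has monodromy $\rho$.

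With the $\cp$-structure in hand, I would verify the Schwarzian pole condition puncture by puncture using the toy models of Lemma \ref{toy} and Corollary \ref{cor-toy}. At a cusp puncture of $\widehat{S}$, a horocyclic neighborhood lifts to a subregion of $\widetilde{A}$, and after inserting the finitely many finite-weight lunes the developing map, pulled back through the exponential coordinate $z \mapsto e^{2\pi i z}$ on the punctured disk, is determined up to post-composition by a M\"obius map by the pleated annular end itself. By Lemma \ref{toy} the monodromy is parabolic, elliptic, or loxodromic according to whether $\sum_i Arg(s_i) \in 2\pi \Z$, and in each case the developing map admits a normal form of either $w \mapsto w^\theta$ or (in the parabolic logarithmic case) a combination of $w^{\pm|\theta|}$ and $\log w$ with $\theta$ determined by the monodromy eigenvalues; a direct computation then gives a Schwarzian with leading term $\tfrac{1-\theta^2}{2w^2}$, matching \eqref{order2-exp}, which is a pole of order at most two. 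At a puncture arising from a geodesic boundary, the punctured-disk uniformization of the inserted half-plane by the loxodromic monodromy realizes the developing map directly as $w \mapsto w^\theta$ for some $\theta \in \C^\ast$ determined by the complex translation length, giving the same normal form.

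The main obstacle should be the geodesic-boundary subcase. One must justify the infinite-weight grafting limit rigorously and argue that $\Psi$ extends coherently across each such puncture-lift despite infinitely many leaves of $\widetilde\lambda$ spiraling into the boundary geodesic in $\widehat{S}$. The key simplification is that in the developing image these spiraling leaves accumulate precisely onto the boundary semicircles of the inserted half-plane, so the conformal germ at the new puncture depends only on the half-plane together with the loxodromic monodromy action, which is handled by the elementary power-map normal form above. Once both cases are verified, requirement (i) of \S2.3 holds at every puncture, and the resulting grafted $\cp$-structure lies in $\mathcal{P}_g(k)$.
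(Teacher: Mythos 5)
Your proposal is correct and follows essentially the same route as the paper: reduce to a local computation of the Schwarzian at each puncture, treating the geodesic-boundary case by infinite grafting and the cusp case according to the total weight of exiting leaves, with power-map/logarithm normal forms for the developing germ in the punctured-disk coordinate $w=e^{2\pi i z}$. One caveat: the object inserted along an infinite-weight boundary leaf is the semi-infinite lune $L_\infty$ of Definition \ref{sil} (an infinite chain of slit copies of $\cp$, conformally a half-plane with immersion $z\mapsto ie^{az}$ as in Lemma \ref{schwarz1}), not an embedded region bounded by two circular arcs through the fixed points, and at a cusp of $\widehat{S}$ the loxodromic alternative of Lemma \ref{toy} cannot occur since $\sum_i\ln\lvert s_i\rvert=0$ there --- neither point affects your normal forms or the conclusion.
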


\smallskip

\textit{Remark.} In the next section we shall see that this $\cp$-structure is what we wanted, i.e.\ it has monodromy $\rho$ (and hence in fact lies in $\mathcal{P}_g^\circ(k)$).  In general, the projective structure obtained after grafting some cusped  hyperbolic surface along a disjoint  collection of weighted simple geodesics could have apparent singularities, i.e.\ punctures having trivial monodromy. We leave a grafting description of \textit{all} $\cp$-structures in $\mathcal{P}_g(k)$ to a future paper.  \vspace{.15mm} 
 
\medskip

We first define:

\begin{defn}[Semi-infinite lune]\label{sil}
A \textit{semi-infinite lune} $L_\infty$ is the surface obtained by a construction as in  Definition \ref{imlune}, in the case when the angle $\alpha =\infty$. This can be described as follows: consider $\cp$ with a slit along the positive imaginary axis $\ell$ from $0$ to $\infty$, and let $\ell^+_0$ and $\ell^-_0$ be the resulting sides. Take a collection of such copies of $\cp$ with an identical slit, indexed by $\mathbb{N}$, and identify $\ell^-_i$, the right side of the slit on the $i$-th copy, with $\ell^+_{i+1}$, the left side of the slit on the $(i+1)$-th copy, for each $i\in \mathbb{N}$.

(Note that  just like  $L_\alpha$ for $\alpha>2\pi$,  $L_\infty$  admits an immersion to $\cp$ since there is a natural projection of each copy of $\cp$ to a fixed $\cp$.) 
\end{defn}

We observe:

\begin{lem}\label{schwarz1} The semi-infinite lune $L_\infty$ can be identified with the half-plane $\mathbb{H}$ via a conformal homeomorphism. This identification can be chosen such that the immersion  of $L_\infty$ to $\cp$ is $f_{a}\mathbin{:}\mathbb{H} \to \cp$ where $f_a(z) = ie^{a z}$, for any $a \in \mathbb{R}^+$.

\end{lem}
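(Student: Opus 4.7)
The plan is to realize the conformal equivalence explicitly by means of the formula $f_a(z)=ie^{az}$ itself, and then verify that the slit-gluings used to build $L_\infty$ correspond precisely to the continuity of $f_a$ across certain horizontal lines in $\mathbb{H}$. The starting observation is that $f_a$ is periodic in the vertical direction with period $2\pi/a$, because $f_a(z+2\pi i/a)=ie^{az}\cdot e^{2\pi i}=f_a(z)$, so the natural thing to do is to chop $\mathbb{H}$ into horizontal strips of height $2\pi/a$ and understand $f_a$ on each piece.

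Concretely, I would set $S_i=\{z\in\mathbb{H}:2\pi i/a\le\text{Im}(z)\le 2\pi(i+1)/a\}$ for $i\in\mathbb{N}$ and show, by a direct computation with the polar form $f_a(x+iy)=e^{ax}e^{i(\pi/2+ay)}$, that the restriction of $f_a$ to the open interior $S_i^\circ$ is a biholomorphism onto $\cp\setminus(\ell\cup\{0,\infty\})$, where $\ell$ is the positive imaginary axis. The argument $\arg(f_a(z))=\pi/2+a\,\text{Im}(z)$ increases by exactly $2\pi$ as $\text{Im}(z)$ sweeps across a strip, so the image traverses $\cp$ once while being slit along $\ell$, and injectivity on $S_i^\circ$ follows from the fact that both the modulus $e^{ax}$ and the residual argument (mod $2\pi$) recover $(x,y)$ uniquely in each strip.

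Next I would check that the gluing defining $L_\infty$ agrees with the continuity of $f_a$ across the common boundary $\text{Im}(z)=2\pi(i+1)/a$ of $S_i$ and $S_{i+1}$. A short expansion shows that as $\varepsilon\to 0^+$,
\begin{equation*}
f_a(x+i(2\pi(i+1)/a-\varepsilon))\approx ie^{ax}+a\varepsilon\, e^{ax},
\end{equation*}
so approaching the top of $S_i$ from below lands on $\ell$ from the positive-real side, which by the conventions of Definition \ref{sil} is $\ell^-_i$; a symmetric calculation shows that approaching the bottom of $S_{i+1}$ from above lands on $\ell$ from the negative-real side, which is $\ell^+_{i+1}$. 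Thus the tautological identification across $\text{Im}(z)=2\pi(i+1)/a$ in $\mathbb{H}$ is exactly the identification $\ell^-_i\sim\ell^+_{i+1}$ used to assemble $L_\infty$.

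Patching together the biholomorphisms $f_a\colon S_i^\circ\to\cp\setminus(\ell\cup\{0,\infty\})$ along these matched boundaries therefore produces a well-defined conformal homeomorphism $\phi_a\mathbin{:}\mathbb{H}\to L_\infty$, and by construction the composition of $\phi_a$ with the natural immersion $L_\infty\to\cp$ is precisely $f_a$. The only step that requires genuine care — and which I view as the main, if modest, obstacle — is the side-bookkeeping: one must correctly match the orientation convention for $\ell^\pm_i$ in Definition \ref{sil} with the sign of the real part of $f_a(x+i(2\pi(i+1)/a\pm\varepsilon))-ie^{ax}$. Once that bookkeeping is fixed, everything else is a routine check on the strip decomposition.
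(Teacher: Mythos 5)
Your proposal is correct and follows essentially the same route as the paper: both decompose $\mathbb{H}$ into horizontal strips of height $2\pi/a$, identify each strip with a slit copy of $\cp$ via $f_a$, and use the periodicity $f_a(z+2\pi i/a)=f_a(z)$ to match the strip boundaries with the slit identifications defining $L_\infty$. The only difference is that you spell out the side-bookkeeping across the slits explicitly, which the paper leaves implicit.
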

\begin{proof} 
Fix an $a \in \mathbb{R}^+$.
The $n$-th copy of $\cp \setminus \ell$  as in Definition \ref{sil} can be identified with a horizontal strip $H_n= \{ \frac{2\pi n}{a}  < \text{Im}(z) < \frac{2\pi (n+1)}{a} \} \subset \C$ via the  biholomorphism $\phi\mathbin{:}H_n \to \cp \setminus \ell$ given by the exponential map $f_a(z) = i e^{a z}$. (The multiplicative factor of $i$ because $\ell$ is the positive \textit{imaginary} axis.) See Figure 3.

Since $f_a (z+ i2\pi/a) = f_a(z)$, and $L_\infty$ comprises copies of $\cp\setminus \ell$ indexed by $n\geq0$ with identifications along the sides of the slit along $\ell$ in the each copy, we obtain  a global  biholomorphism of   $\mathbb{H} = \bigcup\limits_{n\geq 0} H_n $ with $L_\infty$. 
By our construction, the immersion from $L_\infty$ to $\cp$ is then precisely $f_a$.
\end{proof}

\begin{figure}
  \centering
  \vspace{.1in}
  \includegraphics[scale=0.36]{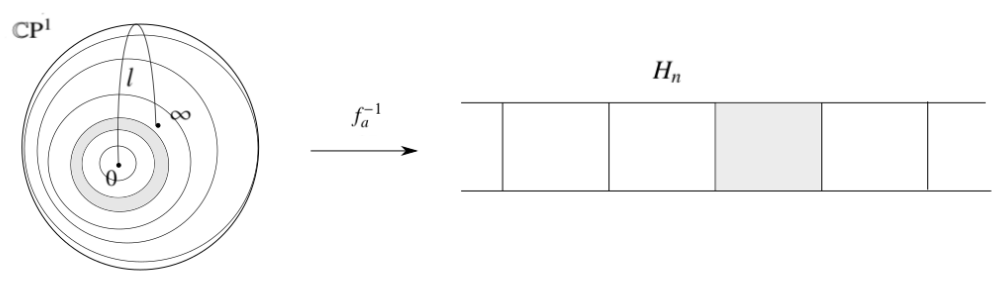}
  \caption{The  conformal identification of $\cp \setminus \ell$ with an infinite strip. }
\end{figure}

\begin{proof}[Proof of Proposition \ref{prop3}] 

We have already seen in \S2.2 that grafting a hyperbolic surface along a measured lamination (in this case, a collection of disjoint weighted geodesic lines) can be defined by inserting lunes in the developing image of the universal cover. This results in a $\cp$-structure on the underlying topological surface (in this case, $S_{g,k}$). Thus, it only remains to verify that the Schwarzian derivative of the developing map has a pole of order at most two, at each puncture, when computed with respect to a reference $\cp$-structure on the closed surface $S_g$, in which a neighborhood of the puncture is the punctured disk $\mathbb{D}^\ast$. 

\smallskip

By Proposition \ref{prop2}, for each puncture of $S_{g,k}$, there are two possibilities (see Figure 4):\\

\textit{Case 1: $\widehat{S}$ has a corresponding geodesic boundary component $c$.}

In this case $\lambda$ will include leaves that spiral onto the boundary component $c$, where  $c \subset \lambda$, and is equipped with infinite weight. 

The developing image of the universal cover of $\widehat{S}$ is a subset of the round disk $\mathbb{H}^2$. A lift of $c$ corresponds to a geodesic line that cuts off a half-plane in $\mathbb{H}^2$. We can assume that this lift $\ell$ is the positive imaginary axis, and the developing image of the surface lies to its right.

Since $c$ has infinite weight, from our description in \S2.2,  grafting along it results in attaching a  semi-infinite lune $L_\infty$  by identifying $\partial_- L_\infty$ with $\ell$. 
Moreover, the line $\ell = \partial_- L_\infty$ is invariant under the hyperbolic translation $\gamma(z) = a z$ which is the monodromy around the boundary component $c$. The  action of the infinite cyclic group $\langle \gamma \rangle$  extends to the entire semi-infinite lune $L_\infty$ in an obvious way: on each copy of $\cp$, $\gamma$ acts with the same expression as before, where $z$ is now the coordinate on that copy. At the level of the quotient surface, this ``infinite grafting" results in attaching a semi-infinite Euclidean cylinder (the quotient $\mathbb{H}/\langle z \mapsto z+ 1\rangle $  of the upper half plane -- see Lemma \ref{schwarz1}), to the boundary component $c$ of $\widehat{S}$. 

In the proof of Lemma \ref{schwarz1}, we can choose the conformal identification  $\mathbb{H} \cong L_\infty$ such that the immersion is $f_a\mathbin{:}L_\infty \to \cp$, where $a$ is the dilation factor of $\gamma$ as above. This is equivariant with respect to  the $\mathbb{Z}$-action  generated by the translation $z\mapsto z+1$ on $\HH$, and the $\mathbb{Z}$-action generated by $\gamma$ on $L_\infty$.  That is, we have $f_a(z+1) = e^af_a(z)$ for any $z\in \HH$.

This immersion $f_a$ is the developing map of the $\cp$-structure obtained after grafting, when restricted to the semi-infinite lune $L_\infty$. From \eqref{schwarz} we can easily compute that the Schwarzian derivative $S(f_a) =  - \frac{1}{2}a^2dz^2$ on $\mathbb{H}$.  This descends to a quadratic differential with a pole of order $2$ on  $\mathbb{H}/ \langle z\mapsto z+1 \rangle$, which can be conformally identified with the punctured disk $\mathbb{D}^\ast$ on $\widehat{S}$ via the map  $z\mapsto w:= e^{2\pi iz}$. Indeed,  in the new coordinate $w$, the constant quadratic differential $dz^2$ has the expression $- \frac{1}{4\pi^2} w^{-2} dw^2$. That is, the Schwarzian derivative $S(f_a)$ above, descends to a holomorphic quadratic differential on $\mathbb{D}^\ast$  with a pole of order two at the puncture.

\smallskip

\begin{figure}
  \centering
  \vspace{.1in}
  \includegraphics[scale=0.28]{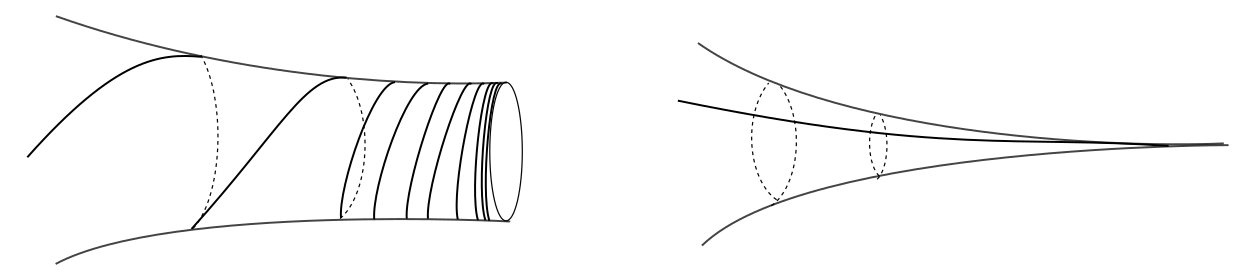}
  \caption{The two possibilities of a  neighborhood of a puncture on $\widehat{S}$: an incomplete end that is completed by adding a geodesic boundary (left), and a complete cusp (right). In the former case, the boundary component has at least one leaf of the lamination $\lambda$ spiralling onto it, as shown. }
\end{figure}

\textit{Case 2: $\widehat{S}$ has a corresponding cusp boundary.}

In this case $\lambda$ could have finitely many leaves exiting the cusp. Let $U$ be a neighborhood of the puncture after grafting;  once again, we conformally identify $U$ with the punctured disk $\mathbb{D}^\ast$. Thus, the universal cover  of $U$ can be identified with the upper half-plane  $\mathbb{H}$, so that the universal covering map is $z\mapsto w:= e^{2\pi iz}$ as before. The developing map of the $\cp$-structure  (after grafting)  restricted to $\mathbb{H}$  is then an equivariant conformal immersion  $f\mathbin{:}\mathbb{H} \to \cp$, where the action on $\mathbb{H}$ is generated by the translation $z\mapsto z+1$, and the action on $\cp$ is generated by the monodromy around the puncture.  In what follows, the fundamental domain of the $\mathbb{Z}$-action on $\mathbb{H}$ is an infinite strip $\Delta$.

There are two sub-cases:

(i) In case $\lambda$ has \textit{no} leaf exiting the cusp boundary, then the monodromy around the puncture remains the same parabolic element of $\pslr$ as in the ungrafted hyperbolic surface, and the developing map $f$ on $\mathbb{H}$ (and hence $\Delta$)  is just the identity map $f(z) =z$.  On the punctured disk $U$, the developing map thus has the expression $\bar{f}(w) = \frac{1}{2\pi i} \ln w$.

(ii) In case $\lambda$ has leaves of total weight $\alpha>0$ exiting the cusp boundary, then $f$ will map a neighbourhood of $\infty$ in $\Delta$, to a neighborhood of $\infty$ in a lune $L_\alpha$, such that the two vertical geodesics bounding $\Delta$ map to the circular arcs bounding $L_\alpha$.  (See Figure 5.)  Such a conformal map will have the asymptotic expression $f(z) = e^{- i \alpha z}$ for $\lvert z \rvert \gg 1$, and hence on the punctured disk, has the expression $\bar{f}(w) = w^{-\alpha/2\pi}$. 

In both possibilities (i) and (ii), a simple computation using \eqref{schwarz-deriv} then yields that the Schwarzian derivative $S(\bar{f})$ has a pole of order  two at the puncture in $U \cong \mathbb{D}^\ast = \{ \lvert w \rvert <1\}$. 
\end{proof}

\begin{figure}
  \centering
  \vspace{.1in}
  \includegraphics[scale=0.4]{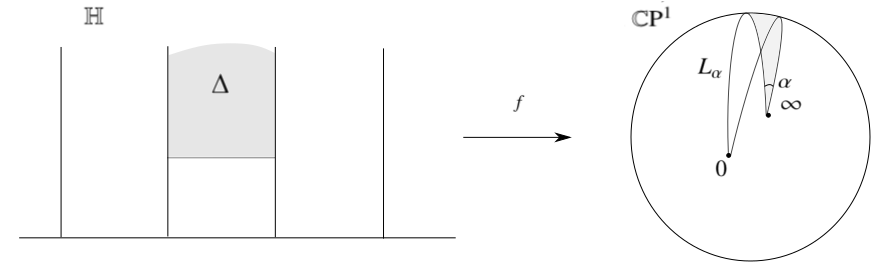}
  \caption{The developing map $f\mathbin{:}\Delta \to \cp$ in case 2(ii) of the proof of Proposition \ref{prop3}. }
\end{figure}

\subsection{Completing the proofs} 

We can now complete the proofs of Theorems \ref{thm1}, \ref{thm2}, \ref{thm3} and \ref{thm4} stated in \S1.

\begin{proof}[Proof of Theorem \ref{thm2}]

We already know from Theorem \ref{ab-thm61} that the image of $\widehat{\Phi}$ is contained in the subset of non-degenerate framed representations. Moreover, from the assumption of no apparent singularities, any such representation will have non-trivial monodromy around each puncture. Thus, it remains to show surjectivity.

Consider a non-degenerate framed representation $\hat{\rho}^\prime  \in  \widehat{\chi}(\Pi)^\ast$ with non-trivial monodromy around each puncture. By Theorem \ref{ab-thm91}, there is a signed ideal triangulation $(T,\epsilon)$ such that $\hat\rho :=\epsilon \cdot \hat{\rho}^\prime$ has well-defined non-zero Fock-Goncharov coordinates with respect to $T$.  (Recall that  $\hat{\rho}$ has a new framing obtained by switching the choice of fixed point at the punctures with negative signs.) By the construction in \S2.6 we obtain a pleated plane $\Psi: \widetilde{S}  \to \HH^3$ that is a $\rho$-equivariant immersion that maps each triangle in $T$ to a totally-geodesic ideal triangle in $\HH^3$. 

By Proposition \ref{prop2} (Step 2) we know that the \textit{straightening} of the pleated plane results in a new embedding $\overline{\Psi}:\widetilde{S} \to \HH^3$ whose image lies on the equatorial plane (which can be identified with the Poincar\'{e} disk model of $\mathbb{H}^2$), that is equivariant with respect to a Fuchsian representation. The quotient is a hyperbolic surface $\widehat{S}$ homeomorphic to $S_{g,k}$, such that the punctures are either cusps or can be completed by adding a geodesic boundary. Moreover, the pleating lines determine a measured lamination $\lambda$ on $\widehat{S}$. By Proposition \ref{prop3} (Step 3) the grafting of $\widehat{S}$ along $\lambda$ results in a meromorphic projective structure $P\in \mathcal{P}_g(k)$. 

Recall that each leaf of $\lambda$ is an edge $e$ of the triangulation $T$; since the weight $\alpha(e)$ on the leaf equals the argument of the complex cross-ratio $s(e)$ by definition (see the proof of Proposition \ref{prop2}), the pleated plane defined by the grafting (see \S2.2) is precisely $\Psi$.   This pleated plane is $\rho$-equivariant, and hence  the monodromy of the $\cp$-structure $P$ is $\rho$.  Moreover, by construction the monodromy of the pleated plane $\Psi$ is exactly $\hat \rho$.  

Indeed, the framing determined by $\Psi$ depends only on the leaves of $\lambda$ of \textit{finite} weight  (\textit{c.f.} \S3.2.3 of \cite{GM2}). To see this,  let $\gamma$ be a geodesic boundary component of $\widehat{S}$; this is a leaf of $\lambda$ of infinite weight  (see the proof of  Proposition \ref{prop2}).  Also, let $F$ be a fundamental domain of the action of $\Pi$ on the universal cover $\widetilde{S}$. Then the framing determined by $\Psi$ assigns to the corresponding puncture a choice of one of the endpoints of the lift $\widetilde{\gamma}$ in  $F$.   Infinite grafting along $\gamma$  inserts  a semi-infinite lune at $\widetilde{\gamma}$, and this does not change these endpoints. 

 At each puncture, we record the sign of the exponent that corresponds to the choice of the eigenline of the monodromy around it, as described above; this is where we use our assumption that this monodromy is non-trivial (\textit{c.f.} Lemma \ref{pmon} and the remark following it). We then obtain  a \textit{signed} meromorphic projective structure $P^\ast \in \mathcal{P}^\ast_g(k)$ with \textit{framed} monodromy $\hat \rho$.

Finally, we change the signing of $P^\ast$ according to $\epsilon$, that is,  switch the signs at the punctures that $\epsilon$ assigns a negative sign.  If $P^{\ast\ast}$ is the resulting signed $\cp$-structure, then the framed monodromy of $P^{\ast\ast}$ is $\epsilon \cdot \hat \rho = \epsilon \cdot \epsilon \cdot \hat\rho^\prime = \hat\rho^\prime$, which is the \textit{original} framed representation. This completes the proof.  \end{proof}

\smallskip

\begin{proof}[Proof of Theorem \ref{thm1}]

By Corollary \ref{thm1-part}, it again suffices to prove the surjectivity of the map $\Phi$.
Let $\rho \in \large\chi(\Pi)$ be a non-degenerate representation in the sense of Definition \ref{bel} such that no puncture has trivial monodromy around it. By Proposition \ref{prop1} (Step 1), there is a framing $\beta$ such that $\hat{\rho} = (\rho,\beta)$ is a non-degenerate framed representation. 
By Theorem \ref{thm2}, there exists a (signed) meromorphic projective structure $P^{\ast} \in \mathcal{P}^\ast_g(k)$ with framed monodromy $\hat{\rho}$. 
 Forgetting the signing of the meromorphic projective structure, and the framing of the representation via the maps $\pi_0$ and $\pi$ respectively (see \eqref{commd}),  we conclude that the image of the (unsigned) $\cp$-structure $\pi_0(P^{\ast}) \in \mathcal{P}_g^\circ(k)$ under the monodromy map $\Phi$ is $\rho$.
 This completes the proof. 
\end{proof}

\smallskip

\begin{proof}[Proof of Theorem \ref{thm3}]

Let $\rho\in \large\chi(\Pi)$ lie in the image of the monodromy map $\Phi$, as described in Theorem \ref{thm1}. As described in the proof, there exists a $\cp$-structure $P \in \mathcal{P}^\circ_g(k)$ with monodromy $\rho$, that is obtained by grafting a hyperbolic surface $\widehat{S}$ homeomorphic with cusps and/or geodesic boundaries at the punctures, along a measured lamination $\lambda$. The lamination $\lambda$ comprises a collection of disjoint weighted geodesic lines, that are in fact edges of an ideal triangulation $T$ on $\widehat{S}$. (In the case $\widehat{S}$ has geodesic boundary components, some leaves of $\lambda$  necessarily spiral on to them.) 

 Let $l$ be a leaf of $\lambda$ with weight $\alpha>0$, or an edge of the triangulation $T$ of weight $\alpha=0$, that we denote,  as usual, by $\alpha \cdot l$.  For each $n \in \mathbb{N}$,  consider the measured lamination 
 \begin{center}
  $\lambda_n = \left.
  \begin{cases}
    \left(\lambda \setminus \{\alpha \cdot  l\}\right)   \cup \{ (\alpha + 2\pi n)\cdot l\} & \text{when } \alpha>0 \\
    \lambda  \cup \{ 2\pi n\cdot l\} & \text{when } \alpha=0 \\
  \end{cases}\right.$
  \end{center}
Then the  projective structure $P_n \in \mathcal{P}^\circ_g(k)$ obtained by grafting $\widehat{S}$ along $\lambda_n$ differs from $P$ by a $2\pi$-grafting, and hence has the same monodromy $\rho$, for each $n\geq 1$. 
 \end{proof}

\smallskip

\begin{proof}[Proof of Theorem \ref{thm4}]
Let $\mathbb{P}$ be the set of punctures such that the corresponding entry of $\mathfrak{n}$ is at most two; these are the \textit{internal marked points} in the terminology of \cite{AllBrid}. We shall denote the rest of the punctures by $\mathbb{P}^\prime$.  Our proof, that we shall now describe, is a straightforward combination of techniques of \cite{GM2} with those in the preceding sections.

As described in \S2.1 and \S2.2 of \cite{GM2} (see also \cite{AllBrid}), the $\cp$-structures in $\mathcal{P}_g^\ast(\mathfrak{n})$ can be thought of having an underlying \textit{marked and bordered surface} $S_g(\mathfrak{n})$. This can be equipped with a hyperbolic metric such that the points in $\mathbb{P}$ correspond to cusps, and the punctures in $\mathbb{P}^\prime$ correspond to geodesic boundary components with distinguished points on it (the number of which is two less than the corresponding entry of  $\mathfrak{n}$).   In what follows, \textit{boundary arcs} shall refer to the arcs between these distinguished points; also, by passing to the universal cover $\widetilde{S}$, the lifts of the punctures in $\mathbb{P}$ and distinguished points determine a \textit{Farey set} $F_\infty$  that generalizes that introduced in \S2.5.

Let $\hat \rho = (\rho,\beta) \in \widehat{\chi}_g(\mathfrak{n})$ be a  framed representation, that is non-degenerate  in the sense of Definition 4.3 of \cite{AllBrid}. Equivalently, in our terminology, $\hat\rho$ does not satisfy conditions (a) and (b) of Definition \ref{degen}, where $\mathbb{P}$ is the set of  punctures taken into consideration for (b), and neither does the following additional condition hold:
\begin{center}
(3) There is some boundary arc $I$ and some lift $\tilde{I}$ in the universal cover whose  endpoints in $F_\infty$ are assigned the same point in $\cp$ by  the framing $\beta$.
\end{center}
(See $(D1)$ in \S2.4.2 of \cite{GM2}, and (D1) of Definition 4.3 in \cite{AllBrid}.) 

Moreover, we shall assume that $\hat\rho$ has non-trivial monodromy around each puncture in $\mathbb{P}$. 

Recall from \S2.5 of  \cite{GM2} that an ideal triangulation $T$ of $S_g(\mathfrak{n})$ has vertices at $\mathbb{P}$ and at the distinguished points on each geodesic boundary component;  moreover, a  \textit{signing} is a map $\epsilon\mathbin{:} \mathbb{P} \to \{-1,1\}$ (see \S9.3 of \cite{AllBrid}).
By Theorem 9.1 of \cite{AllBrid} there is a signed triangulation $(T, \epsilon)$ such that the Fock-Goncharov coordinates (or cross-ratio coordinates)  of $\hat\rho$ with respect to $(T,\epsilon)$ are well-defined and non-zero. In other words, these coordinates are well-defined for the framed representation $\epsilon\cdot \hat \rho$ obtained by switching the choice of fixed point of each monodromy around a puncture in $\mathbb{P}$, exactly as in \S2.6. 
Then, following the construction in \S3.2, we  obtain a pleated plane $\Psi\mathbin{:} \widetilde{S} \to \HH^3$ with pleating lines that are lifts of the edges of the ideal triangulation $T$. This pleated plane $\Psi$ has monodromy  $\epsilon \cdot \hat \rho$.

A \textit{straightening} $\overline{\Psi}$ of this pleated plane is defined exactly as in Definition \ref{straight};  this is the developing map of a hyperbolic surface $\widehat{S}$ homeomorphic to $S_{g,k}$. From the proof of Proposition \ref{prop2}, the punctures in $\mathbb{P}$ correspond to cusps or geodesic boundary components of $\widehat{S}$, and from the description in \S3.2.1 of \cite{GM2}, the punctures in $\mathbb{P}^\prime$ correspond to ``crown ends".  The pleating lines descend to this hyperbolic surface $\widehat{S}$ to define a measured geodesic lamination $\lambda$,  which comprises 
\begin{itemize}
\item a collection of disjoint isolated geodesic lines with finite weight or transverse measure (equal to the argument of the complex cross-ratio coordinate), and
\item the geodesic boundary components and geodesic sides of the crown ends, each with infinite weight. 
\end{itemize}
We then obtain a meromorphic projective structure $P^\ast \in \mathcal{P}_g^\ast(\mathfrak{n})$ by grafting $\widehat{S}$ along $\lambda$. Here the signing is determined as follows: the framing identifies a choice of fixed point for the (non-trivial) monodromy around each puncture in $\mathbb{P}$, and each such choice corresponds to a sign by Lemma \ref{pmon}.  Since the pleated plane defined by this grafting recovers $\Psi$, the framed monodromy of $P^\ast$ is $\epsilon \cdot \hat \rho$. 

By Proposition \ref{prop3}, the developing map will have poles of order at most two at the punctures in $\mathbb{P}$, and by Proposition 4.2 of \cite{GM1}, it has poles of higher order at the punctures in $\mathbb{P}^\prime$.  (Note that these results rely only on a \textit{local} computation of the Schwarzian derivative around each puncture.)

As in the conclusion of the proof of Theorem \ref{thm2}, we then switch the signs at $\mathbb{P}$ according to $\epsilon$, to obtain the desired signed meromorphic structure $P^{\ast\ast}  \in \mathcal{P}_g^\ast(\mathfrak{n})$ that has framed monodromy $\hat\rho$. This proves that $F$ surjects on to the subset of  $\widehat{\chi}_g(\mathfrak{n})$ we specified.

On the other hand,  by Theorem 6.1 of \cite{AllBrid}, the monodromy of any meromorphic projective structure in $\mathcal{P}_g^\ast(\mathfrak{n})$ is necessarily a non-degenerate framed representation, which by our assumption of no apparent singularities, has non-trivial monodromy around each puncture. This completes the proof.   \end{proof}

\bibliographystyle{amsalpha}
\bibliography{mrefs}

\end{document}